\numberwithin{equation}{section}
\newenvironment{proof}{{\noindent \it Proof.} }{\hfill $\square$ \par}
\newtheorem{theorem}{Theorem}[section]
\newtheorem{proposition}{Proposition}[section]
\newtheorem{corollary}{Corollary}[section]
\newtheorem{lemma}{Lemma}[section]
\newtheorem{remark}{Remark}[section]
\newcommand{\dd}{\mathrm{d}}
\newcommand{\bR}{\mathbb{R}}
\newcommand{\bE}{\mathbb{E}}
\newcommand{\bP}{\mathbb{P}}
\begin{document}
\title{Optimal State Equation for the Control of a Diffusion with Two Distinct  Dynamics}
	\author{Zengjing Chen$^{1}$, Panyu Wu$^{1,}$\footnote{Corresponding author. 
E-mail: wupanyu@sdu.edu.cn}~  and Xiaowen Zhou$^{2}$\\	
	{\small $^1$ Zhongtai Securities Institute for Financial Studies, Shandong University, Jinan 250100, China}\\
{\small $^2$ Department of Mathematics and Statistics, Concordia University, Montreal H3G 1M8, Canada}\\
	{\small (Emails: zjchen@sdu.edu.cn; wupanyu@sdu.edu.cn; xiaowen.zhou@concordia.ca)}}	
    \date{}
    \maketitle\noindent{}	
    {\bf Abstract:} We consider a class of stochastic control problems  which has been widely used in  optimal
foraging theory. The state processes have two distinct  dynamics, characterized by two pairs of drift and diffusion coefficients, depending on whether it takes values bigger or smaller than  a threshold value. Adopting a perturbation type approach,  we find an expression for potential measure of the optimal state process.
We then obtain an expression for the transition density of the optimal state process by inverting the associated Laplace transform.  Properties including the stationary distribution of the optimal state process are discussed. Finally, the expression of the value function is given for this class stochastic control problems.

\medskip

    {\bf MSC (2020):} 93E20, 60J60, 60G17

\medskip

    {\bf Keywords:} stochastic optimal  control, value function, diffusion process, Laplace transform, potential measure, density

	\section{Introduction}
\label{sec:intro}

In this paper, we revisit a stochastic  control problem  introduced in McNamara  \cite{McNamara} in which  a one-dimensional diffusion process is controlled by
switching between two given diffusions over  time interval $[0,T]$  with drift and diffusion coefficients $(\mu_1,\sigma_1)$ and $(\mu_2,\sigma_2)$, respectively. The objective is to maximize the probability that the process lies in $[0,\infty)$ at the final time $T$.
It is  known that the optimal state equation can be transformed into the following stochastic differential equation (SDE)
\begin{equation*}\label{sde0}
\dd X_t=(\mu_1 \mathbf{1}_{\{X_t \le 0\}}+\mu_2 \mathbf{1}_{\{X_t> 0\}})\dd t+(\sigma_1\mathbf{1}_{\{X_t\le 0\}}+\sigma_2\mathbf{1}_{\{X_t> 0\}})\dd B_t.
\end{equation*}
This control model was widely used to investigate the optimal diet of a forager faced with two types of preys, where
$(\mu_1,\sigma_1)$ and $(\mu_2,\sigma_2)$ denote the mean and standard deviation of the energy gain per minute under the two option, respectively. The animal can switch freely between the two options.
Here $X_t$ denotes the energy gained by capturing prey at time $t$ and the objective is to maximize 
the probability of $X_T\ge 0$; see, for example,  Houston and McNamara \cite{Houston,Houston2}, Kacelnik and  Bateson \cite{Kacelnik} and references therein.
By identifying an autonomous vehicle or software module as a forager
searching for tasks, Andrews et al.
\cite{Andrews} expanded this  foraging model to fit an autonomous vehicle control problem and provided how to make high-level control
decisions through this control model.

But  the maximal survival probability  can not be evaluated.
To evaluate   the value function of this control problem,  one needs to  know the transition probability density of $X:=(X_t)_{t\ge 0}$.
Thus, it is an interesting problem and also the aim of this paper to characteristic the probability law of solution $X$ to 
\begin{equation}\label{sdea}
\dd X_t=(\mu_1 \mathbf{1}_{\{X_t \le a\}}+\mu_2 \mathbf{1}_{\{X_t> a\}})\dd t+(\sigma_1\mathbf{1}_{X_t\le a}+\sigma_2\mathbf{1}_{\{X_t> a\}})\dd B_t,
\end{equation}
 where $a$ is any fixed real value.

In order to obtain the transition probability densities of SDE \eqref{sdea}, many scholars work on the following  two special cases SDEs \eqref{cltsde2} and  \eqref{cltsde1} for $\mu_1=\mu_2=\mu$ or $\sigma_1=\sigma_2=\sigma$  respectively,
\begin{equation}\label{cltsde2}
\dd X_t=\mu\dd t+(\sigma_1\mathbf{1}_{\{X_t\le a\}}+\sigma_2\mathbf{1}_{\{X_t> a\}})\dd B_t,
\end{equation}
\begin{equation}\label{cltsde1}
\dd X_t=(\mu_1 \mathbf{1}_{\{X_t \le a\}}+\mu_2 \mathbf{1}_{\{X_t> a\}})\dd t+\sigma\dd B_t.
\end{equation}
These two special cases SDEs \eqref{cltsde2} and  \eqref{cltsde1} are the optimal state equations for bang-bang controls of diffusion or drift, respectively.

For example, the transition probability density of $X$ satisfying \eqref{cltsde2} with bang-bang diffusion, 
known as an oscillating Brownian motion, was obtained by Keilson and Wellner  \cite{Keilson}  using the properties of occupation time and Laplace transform. McNamara  \cite{McNamara2} established a regularity condition on the distribution of $X$ for \eqref{cltsde2}.
The transition probability density of $X$ satisfying \eqref{cltsde1} can be recovered from the trivariate density of the Brownian motion, its local and occupation times obtained by Karatzas and Shreve  \cite{Karatzas}.  However, the expression for transition density of the diffusion process satisfying SDE \eqref{sdea} with different coefficients for both  drift and diffusion remains unknown.

Bene$\check{s}$
et al. \cite{Benes} first found an expression for transition density of a Brownian motion with a two-valued drift to characterize the
optimal state trajectory in a control problem. For the bang-bang control of drift, Steven \cite{Steven} obtained the explicit representation of
the optimal value function through the joint distribution of Brownian motion and its local time for square cost function.
Zhang  \cite{Zhang} derived a probabilistic representation of  the transition density of
Brownian motion with a general bounded piecewise-continuous drift function and applied it to a bang-bang control problem. 
Chen et al. \cite{CFLZ} gave an explicit representation of the optimal control and the optimal value
function for the bang-bang control problem of drift by the explicit solution of backward SDEs for symmetric cost function.
Ocone and Weerasinghe \cite{Ocone} obtained some explicit characterizations of the optimal control and optimal value functions with possibly degenerate variance control.
Decamps et al. \cite{Decamps} considered scalar diffusions with scale and speed functions
discontinuous at certain level and called that family of processes self exciting threshold diffusions and obtained semi-analytical expressions for the transition densities  of these diffusions. They also proposed several applications to interest rates modeling.
More recently, an optimal dividend problem is studied in Wang et al. \cite{WYYZ} for a risk process with endogenous regime switching whose  dynamics follows SDE (\ref{sdea}).

In this paper we aim at finding an expression of the value function for the control problem in McNamara  \cite{McNamara} by finding an expression for transition density of  the optimal state  process $X$ satisfying \eqref{sdea}. To this end, we adopt an approach that is different from those for the previous results along this line. More precisely, we first find an expression for potential measure $\mathbb{P}(X_{e_q}\in \dd z)$ for which we apply a perturbation argument and take use of solution to the exit problem for $X$. Here $e_q$ is an independent exponential random variable  with rate $q$. The desired expression of transition density then follows from Laplace inversion, which generalizes those results in  Karatzas and Shreve \cite{Karatzas}, as well as in Keilson and Wellner \cite{Keilson}.
Properties of the obtained transition density are also discussed. Specifically, we find that the optimal state equation is time reversible if and only if the diffusion and drift controls are both  single control. We also establish the continuity of the transition density function with respect to the initial state, and observe that the continuity with respect to the final state depends solely on  whether the diffusion control is single control, and is not unaffected by the drift controls. Moreover, we determine the stationary distribution of the optimal state process when $\mu_1>0$ and $\mu_2<0$. Lastly, we give the expression of the value function for the control problem.

The rest of the paper is structured as follows.
Section 2 contains preparations for the main results.
In Section 3, we derive an expression for the density of $X_{e_q}$.
In Section 4, we obtain  an expression for the transition density of $X$ and investigate  properties of the density function.
Examples and  graphs of the transition density function are provided in Section 5. Finally, in Section 6, we apply the findings to a control problem involving two given drift-diffusion pairs.
	
\section{Preliminaries}

Let $(\Omega,\mathcal{F},\mathbb{F},\mathbb{P})$ be a complete filtered probability space, where $\mathbb{F}=(\mathcal{F}_t)_{t\geq0}$ is the corresponding filtration satisfies the usual condition.
Fix any $a\in \mathbb{R}$ and $\mu_1,\mu_2\in\mathbb{R},\sigma_1,\sigma_2> 0$. Consider the following stochastic differential equation
\begin{eqnarray}\label{SDE}
\dd X_t=(\mu_1 \mathbf{1}_{\{X_t \le a\}}+\mu_2 \mathbf{1}_{\{X_t> a\}})\dd t+(\sigma_1\mathbf{1}_{\{X_t\le a\}}+\sigma_2\mathbf{1}_{\{X_t> a\}})\dd B_t
\end{eqnarray}
with two-valued drift and diffusion coefficients.  It follows from \'{E}tor\'{e} and  Martinez \cite{Etore} or Le Gall \cite{LeGall}  that \eqref{SDE} admits a unique strong solution.

The following solutions to the exit problems for process $X$ were first obtained in Wang et al. \cite{WYYZ}  using a martingale approach. We outline their derivations for convenience of the readers.

Write
\begin{eqnarray*}
	\label{theta12}
	\delta_1^{+}:=\frac{\sqrt{2q\sigma_1^2+\mu_1^2}+\mu_1}{\sigma_1^2},\quad 	\delta_2^{+}:=\frac{\sqrt{2q\sigma_2^2+\mu_2^2}+\mu_2}{\sigma_2^2},\\	 \delta_1^{-}:=\frac{\sqrt{2q\sigma_1^2+\mu_1^2}-\mu_1}{\sigma_1^2},\quad 	\delta_2^{-}:=\frac{\sqrt{2q\sigma_2^2+\mu_2^2}-\mu_2}{\sigma_2^2}.
\end{eqnarray*}
Then two solutions $g^{\pm}_{q}\in C^1(\bR)\cap C^2(\bR\backslash \{a\})$ to
\begin{eqnarray*}
	\frac{1}{2}( \sigma_1^2 \mathbf{1}_{\{x\leq a\}}+\sigma_2^2 \mathbf{1}_{\{x>a\}})g^{\prime\prime}(x)+( \mu_1 \mathbf{1}_{\{x\leq a\}}+\mu_2 \mathbf{1}_{\{x>a\}})g^{\prime}(x)=qg(x)
\end{eqnarray*}
are
\begin{eqnarray}\label{gq}
	\begin{cases}
		g_{q}^-(x)
		=
	\left(c_-e^{\delta_1^-{(x-a)}}+(1-c_-) e^{-\delta_1^+{(x-a)}} \right)\mathbf{1}_{\{x\leq a\}}+	e^{-\delta_2^+{(x-a)}} \mathbf{1}_{\{x>a\}},\\
		g_{q}^+(x)
		=
	e^{\delta_1^-{(x-a)}}\mathbf{1}_{\{x\leq a\}}+	\left((1-c_{+})e^{\delta_2^-{(x-a)}}+c_+ e^{-\delta_2^+{(x-a)}} \right)\mathbf{1}_{\{x>a\}},
	\end{cases}
\end{eqnarray}
where the two constants $c_{\pm}$, determined by the smooth pasting conditions $g_{q}^{\pm\prime}(a-)=g_{q}^{\pm\prime}(a+)$, are given as
\begin{eqnarray*}
c_-
=
\frac{\delta_1^+-\delta_2^+}{\delta_1^-+\delta_1^+},
\quad
1-c_-=\frac{\delta_1^-+\delta_2^+}{\delta_1^-+\delta_1^+}>0,
\end{eqnarray*}
and
\begin{eqnarray*}
c_+=\frac{\delta_2^--\delta_1^-}{\delta_2^-+\delta_2^+},
\quad
1-c_+=\frac{\delta_2^++\delta_1^-}{\delta_2^-+\delta_2^+}>0.
\end{eqnarray*}
We introduce the following first hitting time of level $y\in\mathbb{R}$ for the process $X$ as
$$T_y:=\inf\{t\geq0;\,X_t=y\}$$
with the convention $\inf\emptyset=\infty$.
For $y\leq x\leq z$, applying the Meyer-It\^{o} formula (see Theorem 70 of
Chapter IV in Protter \cite{Protter}) we know that the two processes
$(e^{-qt}g_{q}^{\pm}(X_t))_{t\geq 0}$ are local martingales. Then, it holds that
\[g_{q}^-(x)=\bE_x [e^{-q(T_y\wedge T_z)}g_{q}^-(X_{T_y\wedge T_z})]
=g_{q}^-(y)\bE_x [e^{-qT_y}\mathbf{1}_{\{T_y<T_z\}}]+g_{q}^-(z)\bE_x [e^{-qT_z}\mathbf{1}_{\{T_z<T_y\}}], \]
and
\[g_{q}^+(x)=\bE_x [e^{-q(T_y\wedge T_z)}g_{q}^+(X_{T_y\wedge T_z})]
=g_{q}^+(y)\bE_x [e^{-qT_y}\mathbf{1}_{\{T_y<T_z\}}]+g_{q}^+(z)\bE_x [e^{-qT_z}\mathbf{1}_{\{T_z<T_y\}}]. \]
Solving the above two equations, we get the following solutions to the exit problems
\begin{eqnarray}\label{expectation1}
\bE_{x} [e^{-qT_y}\mathbf{1}_{\{T_y<T_z\}}]=\frac{g_{q}^+(z)g_{q}^-(x)-g_{q}^-(z)g_{q}^+(x)}{g_{q}^-(y)g_{q}^+(z)-g_{q}^-(z)g_{q}^+(y)},
\end{eqnarray}
and
\begin{eqnarray}\label{expectation2}
\bE_{x} [e^{-qT_z}\mathbf{1}_{\{T_z<T_y\}}]=\frac{g_{q}^+(y)g_{q}^-(x)-g_{q}^-(y)g_{q}^+(x)}{g_{q}^-(z)g_{q}^+(y)-g_{q}^-(y)g_{q}^+(z)}.
\end{eqnarray}
Combining \eqref{expectation1} and \eqref{gq}, we have for $y\le x$,
\begin{align}
\mathbb{E}_x\left[e^{-q T_{y}}\right]&=\mathbb{E}_x\left[e^{-q T_{y}}\mathbf{1}_{\{T_{y}<\infty\}}\right]=\lim_{z\to \infty}\mathbb{E}_x\left[e^{-q T_{y}}\mathbf{1}_{\{T_{y}<T_z\}}\right]=\frac{g_{q}^-(x)}{g_{q}^-(y)}.\label{expectation5}
\end{align}
Similarly, combining \eqref{expectation2} and \eqref{gq}, we have for $x\le z$,
\begin{align}
\mathbb{E}_x\left[e^{-q T_{z}}\right]=\frac{g_{q}^+(x)}{g_{q}^+(z)}.\label{expectation3}
\end{align}

Let $e_q$ be an exponential random variable with rate $q$ which is  independent of $X$. We proceed to find an expression for
\begin{equation}\label{laplace}
\bP_{x}(X_{e_q}\in \mathrm{d}z)=\int_0^\infty qe^{-qt}\bP_x(X_t\in \mathrm{d}z)\dd t
\end{equation}
using a perturbation approach and solution to the exit problem.
Then an expression for the transition  density function of $X$ can be obtained by inverting the above Laplace transform.

We recall some Laplace transforms for  linear diffusion process. The results in the following lemma can be found in  1.0.5 on p250 and in  2.0.1 on p295 of  Borodin and Salminen \cite{handbook}.
\begin{lemma}\label{linear}
For $ X_t^*=X_0^*+\mu  t+\sigma B_t$, write $T_z^*:=\inf\{t\ge 0;\ X_t^*=z\}$. For any $x,z\in \mathbb{R}$, we have
\begin{equation}\label{linearpeq}
\mathbb{P}_x(X_{e_q}^*\in\mathrm{d}z)=\frac{q}{\sqrt{2q\sigma^2+\mu^2}}\exp{\left(\frac{\mu(z-x)-|z-x|\sqrt{2q\sigma^2+\mu^2}}{\sigma^2}\right)}\mathrm{d}z,
\end{equation}
\begin{equation}\label{linearexp}
\mathbb{E}_xe^{-qT^*_z}=\exp{\left(\frac{\mu(z-x)-|z-x|\sqrt{2q\sigma^2+\mu^2}}{\sigma^2}\right)},
\end{equation}
\begin{equation}\label{linearlap}
\frac{1}{\sqrt{2q\sigma^2+\mu^2}}\exp{\left(\frac{\mu(z-x)-|z-x|\sqrt{2q\sigma^2+\mu^2}}{\sigma^2 }\right)} 
=  \int_0^{\infty} e^{-q t} \frac{1}{\sqrt{2 \pi t\sigma^2}} \exp{\left(-\frac{(z-x-\mu t)^2}{2\sigma^2 t}\right)} \mathrm{d}t.
\end{equation}
\end{lemma}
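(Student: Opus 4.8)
Since $X^*$ is Brownian motion with constant drift, its transition kernel is the Gaussian density
\[
p_t(x,z)=\frac{1}{\sqrt{2\pi t\sigma^2}}\exp\!\left(-\frac{(z-x-\mu t)^2}{2\sigma^2 t}\right),
\]
and all three displayed identities are consequences of a single computation: the Laplace transform in $t$ of $p_t(x,z)$. The plan is therefore to prove \eqref{linearlap} first by direct integration, and then to read off \eqref{linearpeq} and \eqref{linearexp} from it.

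For \eqref{linearlap} I would substitute $p_t(x,z)$ into $\int_0^\infty e^{-qt}p_t(x,z)\,\mathrm{d}t$ and expand the combined exponent. Writing $w=z-x$, the exponent $-qt-(w-\mu t)^2/(2\sigma^2 t)$ rearranges as $\mu w/\sigma^2-\alpha t-\beta/t$ with $\alpha=(2q\sigma^2+\mu^2)/(2\sigma^2)$ and $\beta=w^2/(2\sigma^2)$; the cross term $\mu w/\sigma^2$ is constant in $t$ and factors out as $e^{\mu(z-x)/\sigma^2}$. This reduces the claim to the classical integral $\int_0^\infty t^{-1/2}e^{-\alpha t-\beta/t}\,\mathrm{d}t=\sqrt{\pi/\alpha}\,e^{-2\sqrt{\alpha\beta}}$, after which the elementary identities $\sqrt{\pi/\alpha}/\sqrt{2\pi\sigma^2}=1/\sqrt{2q\sigma^2+\mu^2}$ and $2\sqrt{\alpha\beta}=|w|\sqrt{2q\sigma^2+\mu^2}/\sigma^2$ deliver exactly the right-hand side of \eqref{linearlap}. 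The one genuinely non-routine step is that classical integral; I would prove it via the Cauchy–Schl\"omilch substitution $s=\sqrt{\alpha t}-\sqrt{\beta/t}$, for which $\alpha t+\beta/t=s^2+2\sqrt{\alpha\beta}$, so that after pairing with the complementary substitution the integral collapses to a full-line Gaussian and yields $\sqrt{\pi/\alpha}\,e^{-2\sqrt{\alpha\beta}}$. This is where essentially all the work sits; the rest is bookkeeping.

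Granting \eqref{linearlap}, identity \eqref{linearpeq} is immediate, since by the definition of the potential measure $\mathbb{P}_x(X^*_{e_q}\in\mathrm{d}z)=\big(\int_0^\infty qe^{-qt}p_t(x,z)\,\mathrm{d}t\big)\,\mathrm{d}z$ is just $q$ times the left-hand side of \eqref{linearlap}. For \eqref{linearexp} I would invoke the resolvent decomposition at the hitting time: writing $u_q(x,z)$ for the common value in \eqref{linearlap}, the strong Markov property at $T^*_z$ together with the regularity of points for $X^*$ gives $u_q(x,z)=\mathbb{E}_x[e^{-qT^*_z}]\,u_q(z,z)$, and since $u_q(z,z)=1/\sqrt{2q\sigma^2+\mu^2}$ (set $x=z$ in \eqref{linearlap}), dividing produces \eqref{linearexp}. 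Alternatively, one can obtain \eqref{linearexp} independently of \eqref{linearlap}: the map $v(x):=\mathbb{E}_x[e^{-qT^*_z}]$ satisfies $\tfrac12\sigma^2 v''+\mu v'=qv$ off the level $z$, with $v(z)=1$ and $v$ bounded on the half-line on which $x$ lies, which forces $v$ to be the exponential $\exp\big((\mu(z-x)-|z-x|\sqrt{2q\sigma^2+\mu^2})/\sigma^2\big)$ built from the appropriately decaying root of the characteristic equation; this is precisely \eqref{linearexp}.
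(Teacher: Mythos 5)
Your proposal is correct, but it does genuinely more than the paper, which offers no proof at all for this lemma: the authors simply cite formulas 1.0.5 (p.~250) and 2.0.1 (p.~295) of Borodin and Salminen's handbook. Your derivation is self-contained and correctly organized around the observation that \eqref{linearlap} is the only real computation: the exponent rearrangement $-qt-(w-\mu t)^2/(2\sigma^2 t)=\mu w/\sigma^2-\alpha t-\beta/t$ with $\alpha=(2q\sigma^2+\mu^2)/(2\sigma^2)$, $\beta=w^2/(2\sigma^2)$ is right, the Cauchy--Schl\"omilch evaluation $\int_0^\infty t^{-1/2}e^{-\alpha t-\beta/t}\,\mathrm{d}t=\sqrt{\pi/\alpha}\,e^{-2\sqrt{\alpha\beta}}$ (equivalently the Bessel identity $K_{1/2}(y)=\sqrt{\pi/(2y)}\,e^{-y}$) is classical and your substitution argument for it is valid, and the constants $\sqrt{\pi/\alpha}/\sqrt{2\pi\sigma^2}=1/\sqrt{2q\sigma^2+\mu^2}$ and $2\sqrt{\alpha\beta}=|w|\sqrt{2q\sigma^2+\mu^2}/\sigma^2$ check out. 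Reading off \eqref{linearpeq} as $q$ times \eqref{linearlap} is immediate, and both of your routes to \eqref{linearexp} are sound: the factorization $u_q(x,z)=\mathbb{E}_x[e^{-qT^*_z}]\,u_q(z,z)$ rests on the first-passage decomposition $p_t(x,z)=\int_0^t\mathbb{P}_x(T^*_z\in\mathrm{d}s)\,p_{t-s}(z,z)$, valid here by path continuity, while the ODE route (characteristic roots $\lambda_\pm=(-\mu\pm\sqrt{\mu^2+2q\sigma^2})/\sigma^2$, selecting the root that keeps $v$ bounded on the relevant half-line, with uniqueness justified by optional stopping applied to the local martingale $e^{-qt}v(X^*_t)$) is exactly the style of argument the paper itself uses in Section 2 to derive the exit identities \eqref{expectation1}--\eqref{expectation3} for the two-regime process, so it fits the paper's toolkit naturally. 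What the citation buys is brevity for standard facts; what your proof buys is a self-contained lemma and a clear view of the logical dependencies, namely that \eqref{linearpeq} and \eqref{linearexp} are corollaries of a single Laplace-transform computation rather than three independent handbook entries.
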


Denote
$$ X_t^{(1)}=X_0^{(1)}+\mu_1 t+\sigma_1 B_t, \quad T_z^{(1)}:=\inf\{t\ge 0;\ X_t^{(1)}=z\},$$ and
$$ X_t^{(2)}=X_0^{(2)}+\mu_2 t+\sigma_2 B_t,\quad T_z^{(2)}:=\inf\{t\ge 0;\ X_t^{(2)}=z\}.$$

We will use the following function $h$ to represent the transition density of $X$ in Section 4, where
\begin{equation}\label{h}
h(t ; x, \mu)
 :=\frac{|x|}{\sqrt{2 \pi t^3}} \exp \left[-\frac{(x+\mu t)^2}{2 t}\right], \quad x, \mu \in \mathbb{R}, t>0,
\end{equation}
is introduced by Karlin and Taylor \cite{karlin} in Theorem 5.3 of Chapter 7 (see also p824 of Karatzas and Shreve \cite{Karatzas}).
It is easy to check that
\begin{equation}\label{hproperty1}
h(t ; x, \mu)=h(t ; -x, -\mu),\quad h(t ; -x, \mu)=h(t ; x, -\mu)=h(t ; x, \mu)e^{2\mu x}.
\end{equation}
Let $T_x^\mu $ be the passage time of a Brownian motion defined by
$$
T_x^\mu :=\inf \{t \geq 0 ; \mu t+B(t)=x\}.
$$
Then from the result on p197 of Karatzas and Shreve \cite{Karatzas2},
\begin{align*}
\mathbb{P}_x\left(T_0^\mu \in d t\right)=h(t ; x,\mu) d t, \quad x, \mu \in \mathbb{R}, t>0, \\
\mathbb{P}_0\left(T_x^\mu \in d t\right)=h(t ; x, -\mu) d t, \quad x, \mu \in \mathbb{R}, t>0.
\end{align*}
In addition,
\begin{eqnarray}\label{hlap}
\int_0^{\infty}e^{-qt}h(t;x,\mu)\mathrm{d} t=
	\begin{cases}
		\exp{\left[-\left(\mu+\mathrm{sgn}(x)\sqrt{\mu^2+2q}\right)x\right]}, 
		   & x\neq0,\\
		0, & x=0,
	\end{cases}
\end{eqnarray}
and then
\begin{equation}\label{hproperty2}
	h(t ; x_1+x_2, \mu)=\int_0^{t}h(t-\tau ; x_1, \mu) h(\tau ; x_2, \mu)\mathrm{d} \tau\,\, \text{ for}\,\, x_1x_2>0.
\end{equation}

From now on we always write $X$ for the unique solution to SDE \eqref{SDE} and write $p(t; x, z)$ for its transition density from state $x$ to $z$ after time $t$.

\section{Probability distribution of $X_{e_q}$}\label{LT}
Recall that $e_q$ is an exponential random variable with rate $q$ that is  independent of $X$. In this section, we find expressions of $\bP_{x}(X_{e_q}\in \dd z)$ for different choices of $x$ and $z$.    
\begin{theorem}\label{peqthm}
For any $x\ge a$, we have
\begin{eqnarray}\label{peq11}
\bP_{x}(X_{e_q}\in \dd z)=
	\begin{cases}
		\frac{qe^{-\delta_2^- (z-a)}}{\sqrt{2q\sigma_2^2+\mu_2^2}}\left[e^{\delta_2^- (x-a)}+\frac{(\delta_2^--\delta_1^-)e^{-\delta_2^+(x-a)}}{\delta_2^++\delta_1^-}\right]
\mathrm{d}z ,  \quad  z\ge x\ge a\\
		\frac{qe^{-\delta_2^+(x-a)}}{\sqrt{2q\sigma_2^2+\mu_2^2}}\left[e^{\delta_2^+ (z-a)}+\frac{(\delta_2^--\delta_1^-)e^{-\delta_2^-(z-a)}}{\delta_2^++\delta_1^-}\right]\mathrm{d}z , \quad x\ge z\ge a.
	\end{cases}
\end{eqnarray}
\end{theorem}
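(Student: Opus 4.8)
The plan is to reduce both cases to the \emph{diagonal} value $\mathbb{P}_z(X_{e_q}\in \dd z)$ and then to compute that diagonal by a perturbation/decomposition argument that compares $X$ with the free diffusion $X^{(2)}$ through Lemma~\ref{linear}.

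First I would establish the strong Markov identity
$$\mathbb{P}_x(X_{e_q}\in \dd z)=\mathbb{E}_x\!\left[e^{-qT_z}\right]\,\mathbb{P}_z(X_{e_q}\in \dd z).$$
This is valid because $\{X_{e_q}\in\dd z\}$ forces the process to reach $z$ at some time $T_z\le e_q$, and by the strong Markov property together with the memorylessness of $e_q$ the residual lifetime $e_q-T_z$ is, conditionally on $\mathcal{F}_{T_z}$ and on $\{T_z<e_q\}$, an independent $\mathrm{Exp}(q)$; since $\mathbb{P}_x(T_z<e_q)=\mathbb{E}_x[e^{-qT_z}]$ the identity follows. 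For $z\ge x\ge a$ the prefactor is $g_q^+(x)/g_q^+(z)$ by \eqref{expectation3}, and for $x\ge z\ge a$ it is $g_q^-(x)/g_q^-(z)$ by \eqref{expectation5}. This isolates the single unknown $v(z)\,\dd z:=\mathbb{P}_z(X_{e_q}\in\dd z)$, $z\ge a$.

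To compute $v(z)$ I would decompose according to the first passage to the threshold $a$,
$$\mathbb{P}_z(X_{e_q}\in\dd z)=\mathbb{P}_z(X_{e_q}\in\dd z,\ e_q<T_a)+\mathbb{E}_z\!\left[e^{-qT_a}\right]\mathbb{P}_a(X_{e_q}\in\dd z).$$
On $\{e_q<T_a\}$ the path stays above $a$ and coincides with the free diffusion $X^{(2)}$, so the first term equals the potential of $X^{(2)}$ killed at $a$, namely $\mathbb{P}_z(X^{(2)}_{e_q}\in\dd z)-\mathbb{E}_z[e^{-qT^{(2)}_a}]\mathbb{P}_a(X^{(2)}_{e_q}\in\dd z)$, each factor of which is explicit from Lemma~\ref{linear}. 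The second term is the genuinely two-regime contribution: before the first down-crossing $T_a=T^{(2)}_a$ in law, so $\mathbb{E}_z[e^{-qT_a}]=g_q^-(z)$, while applying the Step~1 identity at $x=a$ (with $g_q^+(a)=1$ from \eqref{gq}) gives $\mathbb{P}_a(X_{e_q}\in\dd z)=v(z)\,g_q^+(z)^{-1}\dd z$. Substituting closes the loop into one scalar linear equation $v=K(z)+g_q^-(z)\,v/g_q^+(z)$, with solution $v(z)=K(z)\,g_q^+(z)\big/\big(g_q^+(z)-g_q^-(z)\big)$. (Equivalently, one could read $v(z)$ off the resolvent ODE $(q-\mathcal{L})u(\cdot,z)=\delta_z$ solved with the $g_q^\pm$ and a jump in the derivative at $z$; the probabilistic route above is the intended perturbation argument.)

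Finally I would simplify. The free killed diagonal is $K(z)=\tfrac{q}{\sqrt{2q\sigma_2^2+\mu_2^2}}\big(1-e^{-(\delta_2^-+\delta_2^+)(z-a)}\big)$, and since $g_q^+(z)-g_q^-(z)=(1-c_+)\big(e^{\delta_2^-(z-a)}-e^{-\delta_2^+(z-a)}\big)$, the common factor $e^{\delta_2^-(z-a)}-e^{-\delta_2^+(z-a)}$ cancels, leaving $v(z)=\tfrac{q}{\sqrt{2q\sigma_2^2+\mu_2^2}}\big(1+\tfrac{c_+}{1-c_+}e^{-(\delta_2^-+\delta_2^+)(z-a)}\big)$ with $c_+/(1-c_+)=(\delta_2^--\delta_1^-)/(\delta_2^++\delta_1^-)$. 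Multiplying by the Step~1 prefactors and using the factorization $g_q^+(x)=(1-c_+)\big(e^{\delta_2^-(x-a)}+\tfrac{\delta_2^--\delta_1^-}{\delta_2^++\delta_1^-}e^{-\delta_2^+(x-a)}\big)$ then produces the two stated formulas. The main obstacle is precisely this diagonal computation: correctly capturing the excursions of $X$ below $a$ (where $(\mu_1,\sigma_1)$ acts, and through which $\delta_1^-$, equivalently $c_+$, enters the answer) and carrying the ensuing algebra, in particular the cancellation above, through to the clean closed form.
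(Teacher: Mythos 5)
Your proof is correct, and it takes a genuinely different route from the paper's. The paper decomposes $\bP_x(X_{e_q}\in\dd z)$ at $T_a$ (its \eqref{peq6}) and pins down the unknown $\bP_a(X_{e_q}\in\dd z)$ by a two-point renewal argument that bounces between $a$ and an auxiliary level $r$ strictly between $a$ and $z$ (its \eqref{peq0}); the $r$-dependence then cancels after substituting \eqref{expectation12}, \eqref{expectation13} and \eqref{p12}, and the two cases of \eqref{peq11} come from the case split in the killed free potential \eqref{p14}. You instead pivot everything through the diagonal: the factorization $\bP_x(X_{e_q}\in\dd z)=\bE_x[e^{-qT_z}]\,\bP_z(X_{e_q}\in\dd z)$ — the probabilistic form of the classical Green-function product structure $G_q(x,z)\propto\psi_q(x\wedge z)\varphi_q(x\vee z)$ for one-dimensional diffusions — plus a single renewal equation at $T_a$, closed by applying the same factorization at $x=a$ with $g_q^{\pm}(a)=1$. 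I checked your algebra: $K(z)=\tfrac{q}{\sqrt{2q\sigma_2^2+\mu_2^2}}\bigl(1-e^{-(\delta_2^-+\delta_2^+)(z-a)}\bigr)$, $g_q^+(z)-g_q^-(z)=(1-c_+)\bigl(e^{\delta_2^-(z-a)}-e^{-\delta_2^+(z-a)}\bigr)$, the claimed cancellation, $c_+/(1-c_+)=(\delta_2^--\delta_1^-)/(\delta_2^++\delta_1^-)$, and multiplying $v(z)$ by $g_q^+(x)/g_q^+(z)$ (case $z\ge x$) resp.\ $g_q^-(x)/g_q^-(z)$ (case $x\ge z$) reproduce both lines of \eqref{peq11} exactly. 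What each approach buys: yours handles both orderings uniformly, avoids the auxiliary point $r$ and the verification that it drops out, and makes the product structure of the resolvent manifest; the paper's yields $\bP_a(X_{e_q}\in\dd z)$ as a stand-alone formula (its \eqref{p13}) that is reused verbatim for $z<a$ in Theorem \ref{peqthm2} and for Corollary \ref{peqthm3}, whereas your diagonal argument as stated lives on $z\ge a$ and would need a mirrored version there. Two harmless caveats, at the same level of informality as the paper's own $\dd z$ manipulations: your opening identity is really an identity of measures on the far side of $z$ (path continuity gives $\{X_{e_q}\in A\}\subset\{T_z\le e_q\}$ for Borel $A$ beyond $z$, then strong Markov and memorylessness), so $v(z)$ should be read as a one-sided density at the diagonal; and at $z=a$ your fixed-point equation degenerates to $0/0$ since $K(a)=0$ and $g_q^+(a)=g_q^-(a)=1$, but for $z>a$ the denominator is strictly positive and the formula extends by continuity.
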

\begin{proof}
By the strong Markov property for $X$ and the
memoryless property for exponential random variable we have
\begin{align}
\mathbb{P}_{x}\left(X_{e_q} \in \mathrm{d}z \right)  =&\mathbb{P}_{x}\left(T_a<e_q, X_{e_q} \in \mathrm{d}z \right) +\mathbb{P}_{x}\left(T_a\ge e_q, X_{e_q} \in \mathrm{d}z \right)\nonumber\\
=& \mathbb{P}_{x}\left(T_a<e_q\right) \mathbb{P}_a\left(X_{e_q} \in \mathrm{d}z \right) + \mathbb{P}_{x}\left(T^{(2)}_a\ge e_q, X^{(2)}_{e_q} \in \mathrm{d}z \right)\nonumber\\
=&\mathbb{E}_{x} e^{-q T_a}\mathbb{P}_a\left(X_{e_q} \in \mathrm{d}z \right)+ \mathbb{P}_{x}\left(T^{(2)}_a\ge e_q, X^{(2)}_{e_q} \in \mathrm{d}z \right)\label{peq6}.
\end{align}
Since $x\ge a$, by \eqref{expectation5}, we have
\begin{equation}\label{expectation11}
\mathbb{E}_{x}e^{-qT_a}=\frac{g_q^-(x)}{g_q^-(a)}=e^{-\delta_2^+(x-a)}.
\end{equation}
For any $z\in \mathbb{R}$, let $r$ be any numbers between $a$ and $z$, we have
\begin{align*}
\mathbb{P}_{a}\left(X_{e_q} \in \mathrm{d}z \right)  =&\mathbb{P}_a\left(T_{r }<e_q, X_{e_q} \in \mathrm{d}z \right) \nonumber\\
=& \mathbb{P}_a\left(T_{r }<e_q\right) \mathbb{P}_{r }\left(X_{e_q} \in \mathrm{d}z \right) \nonumber\\
=&\mathbb{E}_{a} e^{-q T_{r }}\left[\mathbb{P}_{r }\left(e_q<T_{a}, X_{e_q} \in \mathrm{d}z \right)+\mathbb{P}_{r }\left(e_q>T_{a}, X_{e_q} \in \mathrm{d}z \right)\right]  \nonumber\\
=& \mathbb{E}_{a} e^{-q T_{r }}\left[\mathbb{P}_{r }\left(e_q<T_{a}, X_{e_q} \in \mathrm{d}z \right)+\mathbb{P}_{r }\left(e_q>T_{a}\right) \mathbb{P}_{a}\left(X_{e_q} \in \mathrm{d}z \right)\right]  \nonumber\\
=& \mathbb{E}_{a} e^{-q T_{r }}\left[\mathbb{P}_{r }\left(e_q<T_{a}, X_{e_q} \in \mathrm{d}z \right)+\mathbb{E}_{r } e^{-q T_{a}} \mathbb{P}_{a}\left(X_{e_q} \in \mathrm{d}z \right)\right].
\end{align*}
Thus,
\begin{equation}\label{peq0}
\mathbb{P}_{a}\left(X_{e_q} \in \mathrm{d}z \right)=\frac{\mathbb{E}_{a} e^{-q T_{r }} \mathbb{P}_{r }\left(e_q<T_{a}, X_{e_q} \in \mathrm{d}z \right)}{1-\mathbb{E}_{a} e^{-q T_{r }} \mathbb{E}_{r } e^{-q T_{a}}},\quad z\in\mathbb{R}.
\end{equation}

In this theorem $z\ge a$, thus, $a\le r\le z$. Combining \eqref{expectation5} and \eqref{expectation3}, we have
\begin{align}
\mathbb{E}_{a}e^{-qT_{r }}=\frac{g_q^+(a)}{g_q^+(r )}
=\frac{\delta_2^++\delta_2^-}{(\delta_2^++\delta_1^-)e^{\delta_2^-(r-a)}+(\delta_2^--\delta_1^-)e^{-\delta_2^+(r-a)}},\label{expectation12}
\end{align}
\begin{align}
\mathbb{E}_{r }e^{-qT_{a}}=\frac{g_q^-(r )}{g_q^-(a)}=e^{-\delta_2^+(r-a)}.\label{expectation13}
\end{align}
Using the strong Markov property for $X^{(2)}$ and the
memoryless property for exponential random variable, we have
\begin{align}
\mathbb{P}_{r }\left(e_q<T_{a}, X_{e_q} \in \mathrm{d}z \right)=&\mathbb{P}_{r }\left(e_q<T^{(2)}_{a}, X^{(2)}_{e_q} \in \mathrm{d}z \right)\nonumber\\
=&\mathbb{P}_{r }\left(X^{(2)}_{e_q}\in \mathrm{d}z \right)-\mathbb{P}_{r }\left(e_q\ge T^{(2)}_{a},X^{(2)}_{e_q}\in \mathrm{d}z \right)\nonumber\\
=&\mathbb{P}_{r }\left(X^{(2)}_{e_q}\in \mathrm{d}z \right)-\mathbb{P}_{r }\left(e_q\ge T^{(2)}_{a}\right)\mathbb{P}_{a}\left(X^{(2)}_{e_q}\in \mathrm{d}z \right)\nonumber\\
=&\mathbb{P}_{r }\left(X^{(2)}_{e_q}\in \mathrm{d}z \right)-\mathbb{E}_{r }e^{-qT^{(2)}_{a}}\mathbb{P}_{a}\left(X^{(2)}_{e_q}\in \mathrm{d}z \right)\label{p11}
\end{align}
By Lemma \ref{linear}, we have
\begin{align}
&\mathbb{P}_{r }\left(e_q<T_{a}, X_{e_q} \in \mathrm{d}z \right)\nonumber\\
=&\frac{q}{\sqrt{2q\sigma_2^2+\mu_2^2}}\left[\exp\left(\frac{(z-r)}{\sigma_2^2}\left(\mu_2-\sqrt{2q\sigma_2^2+\mu_2^2}\right)\right)\right.\nonumber\\
 &\left.-\exp\left(\frac{-(r-a)}{\sigma_2^2}\left(\mu_2+\sqrt{2q\sigma_2^2+\mu_2^2}\right)\right)
\exp\left(\frac{(z-a)}{\sigma_2^2}\left(\mu_2-\sqrt{2q\sigma_2^2+\mu_2^2}\right)\right)\right]\mathrm{d}z \nonumber\\
=&\frac{q}{\sqrt{2q\sigma_2^2+\mu_2^2}}\left(e^{\delta_2^- (r-a)}-e^{-\delta_2^+ (r-a)} \right)e^{-\delta_2^- (z-a)} \mathrm{d}z .\label{p12}
\end{align}
Combing \eqref{peq0}, \eqref{expectation12}, \eqref{expectation13} and \eqref{p12}, we obtain that
\begin{align}
&\mathbb{P}_{a}\left(X_{e_q} \in \mathrm{d}z \right)\nonumber\\
 &=\frac{q}{\sqrt{2q\sigma_2^2+\mu_2^2}}e^{-\delta_2^- (z-a)} \frac{(\delta_2^++\delta_2^-) \left(e^{\delta_2^- (r-a)}-e^{-\delta_2^+ (r-a)} \right) \mathrm{d}z }{(\delta_2^++\delta_1^-)e^{\delta_2^-(r-a)}+(\delta_2^--\delta_1^-)e^{-\delta_2^+(r-a)}-(\delta_2^++\delta_2^-)e^{-\delta_2^+(r-a)}}\nonumber\\
 &=\frac{q}{\sqrt{2q\sigma_2^2+\mu_2^2}}\frac{ \delta_2^++\delta_2^-}{\delta_2^++\delta_1^-}e^{-\delta_2^- (z-a)}\mathrm{d}z. \label{p13}
\end{align}

Similarly to \eqref{p11} and by Lemma \ref{linear}, we have
\begin{align}
&\mathbb{P}_{x}\left(e_q<T^{(2)}_{a},X^{(2)}_{e_q}\in \mathrm{d}z \right)\nonumber\\
=&\mathbb{P}_{x}\left(X^{(2)}_{e_q}\in \mathrm{d}z \right)-\mathbb{E}_{x}e^{-qT^{(2)}_{a}}\mathbb{P}_{a}\left(X^{(2)}_{e_q}\in \mathrm{d}z \right)\nonumber\\
=&\frac{q}{\sqrt{2q\sigma_2^2+\mu_2^2}}\left[\exp\left(\frac{\mu_2(z-x)-|z-x|\sqrt{2q\sigma_2^2+\mu_2^2}}{\sigma_2^2}\right)\right.\nonumber\\
 &\left.-\exp\left(\frac{\mu_2(a-x)-(x-a)\sqrt{2q\sigma_2^2+\mu_2^2}}{\sigma_2^2}\right)
\exp\left(\frac{\mu_2(z-a)-(z-a)\sqrt{2q\sigma_2^2+\mu_2^2}}{\sigma_2^2}\right)\right]\mathrm{d}z \nonumber\\
=&\begin{cases}\frac{q}{\sqrt{2q\sigma_2^2+\mu_2^2}}\left(e^{\delta_2^- (x-a)}-e^{-\delta_2^+ (x-a)} \right)e^{-\delta_2^- (z-a)}\mathrm{d}z , \quad z\ge x\ge a,\\
\frac{q}{\sqrt{2q\sigma_2^2+\mu_2^2}}\left(e^{\delta_2^+ (z-a)}-e^{-\delta_2^- (z-a)} \right)e^{-\delta_2^+ (x-a)} \mathrm{d}z , \quad x\ge z\ge a.\\
\end{cases}\label{p14}
\end{align}

Putting together \eqref{peq6}, \eqref{expectation11}, \eqref{p13} and \eqref{p14}, we have
\begin{align}
&\mathbb{P}_{x}\left(X_{e_q} \in \mathrm{d}z \right)=\mathbb{E}_{x} e^{-q T_a}\mathbb{P}_a\left(X_{e_q} \in \mathrm{d}z \right)+ \mathbb{P}_{x}\left(T^{(2)}_a\ge e_q, X^{(2)}_{e_q} \in \mathrm{d}z \right)\nonumber\\
 =&e^{-\delta_2^+(x-a)} \frac{q }{\sqrt{2q\sigma_2^2+\mu_2^2}}\frac{\delta_2^++\delta_2^-}{\delta_2^++\delta_1^-}e^{-\delta_2^- (z-a)}\mathrm{d}z\nonumber\\
& +\begin{cases}\frac{q}{\sqrt{2q\sigma_2^2+\mu_2^2}}\left(e^{\delta_2^- (x-a)}-e^{-\delta_2^+ (x-a)} \right)e^{-\delta_2^- (z-a)}\mathrm{d}z, \  z\ge x\ge a, \nonumber\\
\frac{q}{\sqrt{2q\sigma_2^2+\mu_2^2}}\left(e^{\delta_2^+ (z-a)}-e^{-\delta_2^- (z-a)} \right)e^{-\delta_2^+ (x-a)} \mathrm{d}z, \ x\ge z\ge a,\nonumber\\
\end{cases}\nonumber\\
=&
\begin{cases}\frac{qe^{-\delta_2^-(z-a)}}{\sqrt{2q\sigma_2^2+\mu_2^2}}\left[e^{\delta_2^-(x-a)}+ \frac{(\delta_2^--\delta_1^-)e^{-\delta_2^+(x-a)}}{(\delta_2^++\delta_1^-)}\right]\mathrm{d}z , \quad z\ge x\ge a, \nonumber\\
	\frac{qe^{-\delta_2^+(x-a)}}{\sqrt{2q\sigma_2^2+\mu_2^2}}\left[e^{\delta_2^+(z-a)}+\frac{(\delta_2^--\delta_1^-)e^{-\delta_2^-(z-a)} }{(\delta_2^++\delta_1^-)}\right]\mathrm{d}z , \quad x\ge z\ge a.
\end{cases}\nonumber
\end{align}
We thus finish  the proof of Theorem \ref{peqthm}.
\end{proof}

\begin{theorem}\label{peqthm2}
	For any $x\ge a$, we have
		\begin{equation}\label{peq5}
		\mathbb{P}_{x}\left(X_{e_q} \in \mathrm{d}z \right)
		=\frac{q}{\sqrt{2q\sigma_1^2+\mu_1^2}}    \frac{\delta_1^++\delta_1^-}{\delta_2^++\delta_1^-} e^{-\delta_2^+(x-a)}e^{\delta_1^+(z-a)}  \mathrm{d}z  , \quad z<a.
	\end{equation}
\end{theorem}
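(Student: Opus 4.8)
The plan is to reuse the decomposition and the perturbation identity already in place from the proof of Theorem \ref{peqthm}, now adapted to the regime $z<a\le x$. I would begin with the splitting used in \eqref{peq6},
\begin{equation*}
\mathbb{P}_{x}(X_{e_q}\in\dd z)=\mathbb{P}_{x}(T_a<e_q,X_{e_q}\in\dd z)+\mathbb{P}_{x}(T_a\ge e_q,X_{e_q}\in\dd z),
\end{equation*}
and observe that the second term is null when $z<a$: on $\{e_q<T_a\}$ the path issued from $x\ge a$ has not yet reached the level $a$, so by continuity $X_{e_q}\ge a>z$. Hence only the first term survives, and the strong Markov property at $T_a$ together with the memorylessness of $e_q$ give
\begin{equation*}
\mathbb{P}_{x}(X_{e_q}\in\dd z)=\mathbb{E}_x\!\left[e^{-qT_a}\right]\mathbb{P}_a(X_{e_q}\in\dd z)=e^{-\delta_2^+(x-a)}\,\mathbb{P}_a(X_{e_q}\in\dd z),
\end{equation*}
where the factor $\mathbb{E}_x[e^{-qT_a}]$ is precisely \eqref{expectation11}. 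This reduces the theorem to evaluating $\mathbb{P}_a(X_{e_q}\in\dd z)$ for $z<a$.

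For that piece I would apply the perturbation identity \eqref{peq0}, which was derived for arbitrary $z\in\mathbb{R}$, choosing the auxiliary level $r$ with $z<r<a$. Since $r<a$, the process started at $r$ evolves as the linear diffusion $X^{(1)}$ until it hits $a$, so on $\{e_q<T_a\}$ one has $X_{e_q}=X^{(1)}_{e_q}$. The three ingredients of \eqref{peq0} are then read off exactly as in \eqref{p11}--\eqref{p12}, but now on the lower half-line with $X^{(1)}$ in place of $X^{(2)}$: from \eqref{expectation5} and \eqref{expectation3} (using $g_q^-(a)=g_q^+(a)=1$) one gets $\mathbb{E}_a[e^{-qT_r}]=1/g_q^-(r)$ and $\mathbb{E}_r[e^{-qT_a}]=e^{\delta_1^-(r-a)}$, while Lemma \ref{linear} with parameters $(\mu_1,\sigma_1)$ yields
\begin{equation*}
\mathbb{P}_r(e_q<T_a,X_{e_q}\in\dd z)=\frac{q}{\sqrt{2q\sigma_1^2+\mu_1^2}}\,e^{\delta_1^+(z-a)}\left[e^{-\delta_1^+(r-a)}-e^{\delta_1^-(r-a)}\right]\dd z.
\end{equation*}
Substituting into \eqref{peq0}, the factor $g_q^-(r)$ and the bracket $e^{-\delta_1^+(r-a)}-e^{\delta_1^-(r-a)}$ cancel between numerator and denominator, leaving
\begin{equation*}
\mathbb{P}_a(X_{e_q}\in\dd z)=\frac{q}{\sqrt{2q\sigma_1^2+\mu_1^2}}\,\frac{1}{1-c_-}\,e^{\delta_1^+(z-a)}\,\dd z,\quad z<a.
\end{equation*}
Finally, inserting $1/(1-c_-)=(\delta_1^++\delta_1^-)/(\delta_2^++\delta_1^-)$ and the prefactor $e^{-\delta_2^+(x-a)}$ produces \eqref{peq5}.

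The algebra is routine once the reduction is set up, so the only genuinely delicate points are conceptual rather than computational: justifying that the $\{T_a\ge e_q\}$ contribution vanishes for $z<a$ (via path continuity), and checking that \eqref{peq0} remains applicable with the auxiliary level $r<a$, which it does since its derivation never used $z\ge a$. A convenient internal check is that the final expression must not depend on the free level $r$; confirming that all $r$-dependent factors cancel validates the manipulation.
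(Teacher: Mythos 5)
Your proof is correct and follows essentially the same route as the paper's: the same strong Markov decomposition at $T_a$ (with the $\{T_a\ge e_q\}$ contribution vanishing for $z<a$), the same perturbation identity \eqref{peq0} applied with an auxiliary level $z<r<a$, and the same three ingredients \eqref{expectation7}, \eqref{expectation8} and \eqref{5}, with all $r$-dependent factors cancelling exactly as in the paper (your $1/(1-c_-)=(\delta_1^++\delta_1^-)/(\delta_2^++\delta_1^-)$ matches the paper's final algebra). Your explicit path-continuity argument for dropping the second term, and the $r$-independence sanity check, are points the paper leaves implicit but change nothing of substance.
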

\begin{proof}
By the strong Markov property for $X$ and the
memoryless property for exponential random variable we have
\begin{align}\label{peq3}
	\mathbb{P}_{x}\left(X_{e_q} \in \mathrm{d}z \right)  =\mathbb{P}_{x}\left(T_a<e_q, X_{e_q} \in \mathrm{d}z \right)
	&= \mathbb{P}_{x}\left(T_a<e_q\right) \mathbb{P}_a\left(X_{e_q} \in \mathrm{d}z \right)\nonumber\\
	&=\mathbb{E}_{x} e^{-q T_a}\mathbb{P}_a\left(X_{e_q} \in \mathrm{d}z \right).
\end{align}
Due to \eqref{peq0}, we have
\begin{equation}\label{peq4}
	\mathbb{P}_{a}\left(X_{e_q} \in \mathrm{d}z \right)=\frac{\mathbb{E}_{a} e^{-q T_{r }} \mathbb{P}_{r }\left(e_q<T_{a}, X_{e_q} \in \mathrm{d}z \right)}{1-\mathbb{E}_{a} e^{-q T_{r }} \mathbb{E}_{r } e^{-q T_{a}}},
\end{equation}
where $z<r<a$ in this proof.
Using the strong Markov property for $X^{(1)}$ and the
memoryless property for exponential random variable and Lemma \ref{linear}, we have
\begin{align}
	&\mathbb{P}_{r }\left(e_q<T_{a},X_{e_q}\in \mathrm{d}z \right)=\mathbb{P}_{r }\left(e_q<T^{(1)}_{a},X^{(1)}_{e_q}\in \mathrm{d}z \right)\nonumber\\
	=&\mathbb{P}_{r }\left(X^{(1)}_{e_q}\in \mathrm{d}z \right)-\mathbb{P}_{r }\left(e_q\ge T^{(1)}_{a},X^{(1)}_{e_q}\in \mathrm{d}z \right)\nonumber\\
		=&\mathbb{P}_{r }\left(X^{(1)}_{e_q}\in \mathrm{d}z \right)-
		\mathbb{P}_{r }\left(e_q\ge T^{(1)}_{a}\right)\mathbb{P}_{a}\left(X^{(1)}_{e_q}\in \mathrm{d}z \right)\nonumber\\
		=&\mathbb{P}_{r }\left(X^{(1)}_{e_q}\in \mathrm{d}z\right )-\mathbb{E}_{r }e^{-qT^{(1)}_{a}}\mathbb{P}_{a}\left(X^{(1)}_{e_q}\in \mathrm{d}z \right)\nonumber\\
		 =&\frac{q}{\sqrt{2q\sigma_1^2+\mu_1^2}}\left[\exp\left(\frac{(z-r)}{\sigma_1^2}\left(\mu_1+\sqrt{2q\sigma_1^2+\mu_1^2}\right)\right)\right.\nonumber\\
		 &\left.-\exp\left(\frac{(a-r)}{\sigma_1^2}(\mu_1-\sqrt{2q\sigma_1^2+\mu_1^2})\right)\exp\left(\frac{(z-a)}{\sigma_1^2}\left(\mu_1+\sqrt{2q\sigma_1^2+\mu_1^2}\right)\right)\right]\mathrm{d}z \nonumber\\
		=&\frac{q}{\sqrt{2q\sigma_1^2+\mu_1^2}}\left(e^{-\delta_1^+ (r-a)}-e^{\delta_1^- (r-a)} \right)e^{\delta_1^+ (z-a)} \mathrm{d}z . \label{5}
	\end{align}
	It follows from \eqref{expectation5} and \eqref{expectation3} that
	\begin{align}\label{expectation7}
		\mathbb{E}_{a}e^{-qT_{r }}=\frac{g_q^-(a)}{g_q^-(r )}
	=\frac{\delta_1^++\delta_1^-}{(\delta_1^+-\delta_2^+)e^{\delta_1^-(r-a)}+(\delta_2^++\delta_1^-)e^{-\delta_1^+(r-a)}},
	\end{align}
	\begin{equation}\label{expectation8}
		\mathbb{E}_{r }e^{-qT_a}=\frac{g_q^+(r )}{g_q^+(a)}=e^{\delta_1^-(r-a)}.
	\end{equation}
	Combine \eqref{expectation11} and \eqref{peq3}-\eqref{expectation8}, we have
	\begin{align*}
		&\mathbb{P}_{x}\left(X_{e_q} \in \mathrm{d}z \right)\\
=&\frac{q}{\sqrt{2q\sigma_1^2+\mu_1^2}}\frac{e^{-\delta_2^+(x-a)}e^{\delta_1^+(z-a)}(\delta_1^++\delta_1^-)(e^{-\delta_1^+(r-a)}-e^{\delta_1^-(r-a)})}
		{(\delta_1^+-\delta_2^+)e^{\delta_1^-(r-a)}+(\delta_2^++\delta_1^-)e^{-\delta_1^+(r-a)}-(\delta_1^++\delta_1^-)e^{\delta_1^-(r-a)}}  \mathrm{d}z \nonumber\\
=&\frac{q}{\sqrt{2q\sigma_1^2+\mu_1^2}}    \frac{\delta_1^++\delta_1^-}{\delta_2^++\delta_1^-} e^{-\delta_2^+(x-a)}e^{\delta_1^+(z-a)}  \mathrm{d}z  ,
	\end{align*}
	which proves the equality \eqref{peq5}.
\end{proof}

By the symmetry of Brownian motion, we have
$$\dd (-X_t)=(-\mu_2 \mathbf{1}_{\{-X_t < -a\}}-\mu_1 \mathbf{1}_{\{-X_t\ge -a\}})\dd t+(\sigma_2\mathbf{1}_{\{-X_t< -a\}}+\sigma_1\mathbf{1}_{\{-X_t\ge- a\}})\dd B_t,$$
Thus, we can obtain the following corollary from Theorem \ref{peqthm} and \ref{peqthm2}, and we omit its proof.
\begin{corollary}\label{peqthm3}
For any $x< a$, we have
\begin{eqnarray*}
\bP_{x}(X_{e_q}\in \dd z)=
	\begin{cases}
		\frac{qe^{\delta_1^+ (z-a)}}{\sqrt{2q\sigma_1^2+\mu_1^2}}\left[e^{-\delta_1^+ (x-a)}+\frac{( \delta_1^+-\delta_2^+)e^{\delta_1^-(x-a)}}{\delta_1^-+\delta_2^+}\right]
\mathrm{d}z,  \quad  z\le x< a\\
		\frac{qe^{\delta_1^-(x-a)}}{\sqrt{2q\sigma_1^2+\mu_1^2}}\left[e^{-\delta_1^- (z-a)}+\frac{(\delta_1^+ - \delta_2^+)e^{\delta_1^+(z-a)}}{\delta_1^-+\delta_2^+}\right]\mathrm{d}z, \quad x< z\le a\\
\frac{q}{\sqrt{2q\sigma_2^2+\mu_2^2}}    \frac{\delta_2^-+\delta_2^+}{\delta_1^-+\delta_2^+} e^{\delta_1^-(x-a)}e^{-\delta_2^-(z-a)}   \mathrm{d}z  , \quad\quad\quad\ \  z>a>x.
	\end{cases}
\end{eqnarray*}
\end{corollary}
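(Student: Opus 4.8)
The plan is to read the corollary off from Theorems~\ref{peqthm} and~\ref{peqthm2} by exploiting the reflection symmetry already recorded above. Set $Y:=-X$. The displayed SDE for $-X_t$ shows that $Y$ is again a solution of~\eqref{SDE}, but with threshold $\tilde a:=-a$, lower-regime pair $(\tilde\mu_1,\tilde\sigma_1)=(-\mu_2,\sigma_2)$ and upper-regime pair $(\tilde\mu_2,\tilde\sigma_2)=(-\mu_1,\sigma_1)$. Since $x<a$ is equivalent to $-x>-a=\tilde a$, the process $Y$ starts at $y:=-x\ge\tilde a$, which is exactly the regime handled by Theorems~\ref{peqthm} and~\ref{peqthm2}. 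The identity $\bP_{x}(X_{e_q}\in\dd z)=\bP^{Y}_{-x}(Y_{e_q}\in\dd(-z))$, together with the invariance of Lebesgue measure under $z\mapsto -z$, will transport each of those formulas back to $X$.

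First I would record how the exponents transform. Writing $\tilde\delta_i^{\pm}$ for the constants attached to $Y$ and substituting into their definitions gives the dictionary $\tilde\delta_1^{\pm}=\delta_2^{\mp}$ and $\tilde\delta_2^{\pm}=\delta_1^{\mp}$, together with $\sqrt{2q\tilde\sigma_1^2+\tilde\mu_1^2}=\sqrt{2q\sigma_2^2+\mu_2^2}$ and $\sqrt{2q\tilde\sigma_2^2+\tilde\mu_2^2}=\sqrt{2q\sigma_1^2+\mu_1^2}$. I would also note the shift rules $w-\tilde a=-(z-a)$ and $y-\tilde a=-(x-a)$ for $w:=-z$, so that every factor of the form $\tilde\delta\,(w-\tilde a)$ or $\tilde\delta\,(y-\tilde a)$ in the quoted formulas turns into an expression in the original $\delta$ and in $(z-a),(x-a)$.

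Next I would match the three regimes. The case $z\le x<a$ corresponds to $w=-z\ge y=-x>\tilde a$, i.e. the first line of Theorem~\ref{peqthm} applied to $Y$; the case $x<z\le a$ corresponds to $y\ge w\ge\tilde a$, the second line of Theorem~\ref{peqthm}; and the case $z>a>x$ corresponds to $w<\tilde a<y$, which is Theorem~\ref{peqthm2}. In each regime I would insert the dictionary and the shift rules into the quoted right-hand side and simplify. For instance, in the first regime the prefactor $q\,e^{-\tilde\delta_2^-(w-\tilde a)}/\sqrt{2q\tilde\sigma_2^2+\tilde\mu_2^2}$ becomes $q\,e^{\delta_1^+(z-a)}/\sqrt{2q\sigma_1^2+\mu_1^2}$, the leading bracket term $e^{\tilde\delta_2^-(y-\tilde a)}$ becomes $e^{-\delta_1^+(x-a)}$, and the correction term $(\tilde\delta_2^--\tilde\delta_1^-)e^{-\tilde\delta_2^+(y-\tilde a)}/(\tilde\delta_2^++\tilde\delta_1^-)$ becomes $(\delta_1^+-\delta_2^+)e^{\delta_1^-(x-a)}/(\delta_1^-+\delta_2^+)$, which is exactly the first line of the corollary; the other two regimes are handled identically.

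The only delicate point is the bookkeeping: one must keep the index swap $1\leftrightarrow 2$, the sign flips $\mu_i\mapsto-\mu_i$, the resulting exchange $\delta^{+}\leftrightarrow\delta^{-}$, and the orientation reversal $z\mapsto -z$ all mutually consistent. Once the dictionary $\tilde\delta_1^{\pm}=\delta_2^{\mp}$, $\tilde\delta_2^{\pm}=\delta_1^{\mp}$ is fixed, however, the three verifications are purely mechanical algebraic substitutions requiring no new probabilistic input beyond Theorems~\ref{peqthm} and~\ref{peqthm2}, which is precisely why the proof can be omitted.
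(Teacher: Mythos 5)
Your proposal is correct and is exactly the argument the paper intends: the paper itself derives the corollary by noting that $-X$ solves \eqref{SDE} with threshold $-a$ and swapped, sign-flipped coefficient pairs, then invokes Theorems \ref{peqthm} and \ref{peqthm2}, omitting the bookkeeping. Your dictionary $\tilde\delta_1^{\pm}=\delta_2^{\mp}$, $\tilde\delta_2^{\pm}=\delta_1^{\mp}$ and the regime matching check out in all three cases, so you have simply supplied the mechanical substitutions the paper leaves to the reader.
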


\section{Transition density of the diffusion process}

Inverting the Laplace transforms obtained in Section \ref{LT}, we can find expressions for the transition density of $X$.

\begin{theorem}\label{densityth1}
For any $x,z\ge a$, we have
\begin{align}\label{density1}
&\bP_{x}(X_t\in \dd z) \nonumber\\
=&\left\{\frac{2}{\sigma_2^2} \int_0^{\infty} \int_0^t e^{\frac{2\mu_2(z-a)}{\sigma_2^2}} h\left(t-\tau ; \frac{b}{\sigma_1}, -\frac{\mu_1}{\sigma_1}\right) h\left(\tau ;\frac{z+x-2a+b}{\sigma_2},\frac{ \mu_2}{\sigma_2}\right) \mathrm{d} \tau \mathrm{d} b\right.\nonumber\\
 &\left.+\frac{1}{\sqrt{2 \pi t\sigma_2^2}} \left[\exp \left(-\frac{\left(z-x-\mu_2t\right)^2}{2 t\sigma_2^2}\right)
-\exp \left(-\frac{\left(z+x-2a-\mu_2t\right)^2}{2 t\sigma_2^2}-\frac{2 \mu_2(x-a)}{\sigma_2^2}\right)\right]\right\}\mathrm{d}z.
\end{align}
\end{theorem}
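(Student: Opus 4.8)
The plan is to recover $p(t;x,z)$ from its Laplace transform in $t$. Reading \eqref{laplace} at the level of densities gives $\bP_{x}(X_{e_q}\in\dd z)=\big(\int_0^\infty q e^{-qt}p(t;x,z)\,\dd t\big)\dd z$, so after dividing by $q$ the density appearing in \eqref{peq11} is exactly $\int_0^\infty e^{-qt}p(t;x,z)\,\dd t$. It therefore suffices to exhibit the right-hand side of \eqref{density1} as a function whose $t$-Laplace transform matches \eqref{peq11}$/q$, and then invoke uniqueness of Laplace inversion. Throughout I abbreviate $\Lambda_i:=\sqrt{2q\sigma_i^2+\mu_i^2}$, so that $\delta_i^{\pm}=(\Lambda_i\pm\mu_i)/\sigma_i^2$, $\delta_i^-+\delta_i^+=2\Lambda_i/\sigma_i^2$ and $\delta_i^+-\delta_i^-=2\mu_i/\sigma_i^2$; these two last relations are the workhorses of the computation.

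First I would treat the diagonal term. In both cases of \eqref{peq11} the first summand inside the bracket contracts, after using $\delta_2^-=\delta_2^+-2\mu_2/\sigma_2^2$, to $\tfrac{1}{\Lambda_2}\exp\big(\tfrac{\mu_2(z-x)-|z-x|\Lambda_2}{\sigma_2^2}\big)$, which by \eqref{linearlap} is precisely the Laplace transform of the first Gaussian $\tfrac{1}{\sqrt{2\pi t\sigma_2^2}}\exp\big(-(z-x-\mu_2 t)^2/(2\sigma_2^2 t)\big)$. The absolute value in \eqref{linearlap} is exactly what makes the two subcases $z\ge x\ge a$ and $x\ge z\ge a$ collapse to a single expression, so that \eqref{density1} holds uniformly for $x,z\ge a$.

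The core of the argument is the second summand, which carries the coefficient $\tfrac{\delta_2^--\delta_1^-}{\delta_2^++\delta_1^-}$ times the common factor $\tfrac{1}{\Lambda_2}e^{-\delta_2^-(z-a)-\delta_2^+(x-a)}$. I would split the coefficient as $\tfrac{\delta_2^--\delta_1^-}{\delta_2^++\delta_1^-}=\tfrac{\delta_2^-+\delta_2^+}{\delta_2^++\delta_1^-}-1$. The ``$-1$'' piece gives $-\tfrac{1}{\Lambda_2}e^{-\delta_2^-(z-a)-\delta_2^+(x-a)}$, which by \eqref{linearlap} (applied with displacement $z+x-2a\ge 0$, then absorbing the factor $e^{-2\mu_2(x-a)/\sigma_2^2}$ via $\delta_2^-+2\mu_2/\sigma_2^2=\delta_2^+$) inverts to precisely the negative second Gaussian in \eqref{density1}. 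Using $\delta_2^-+\delta_2^+=2\Lambda_2/\sigma_2^2$ to cancel one factor of $\Lambda_2$, the surviving piece equals $\tfrac{2}{\sigma_2^2}\tfrac{1}{\delta_1^-+\delta_2^+}e^{-\delta_2^-(z-a)-\delta_2^+(x-a)}$. To identify this as the Laplace transform of the double integral I would write $\tfrac{1}{\delta_1^-+\delta_2^+}=\int_0^\infty e^{-(\delta_1^-+\delta_2^+)b}\,\dd b$ and $e^{-\delta_2^-(z-a)}=e^{2\mu_2(z-a)/\sigma_2^2}e^{-\delta_2^+(z-a)}$, so that the integrand in $b$ becomes $e^{2\mu_2(z-a)/\sigma_2^2}\,e^{-\delta_1^- b}\,e^{-\delta_2^+(z+x-2a+b)}$. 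By \eqref{hlap}, since $b>0$ and $z+x-2a+b>0$, the factors $e^{-\delta_1^- b}$ and $e^{-\delta_2^+(z+x-2a+b)}$ are the Laplace transforms of $h(\cdot;b/\sigma_1,-\mu_1/\sigma_1)$ and $h(\cdot;(z+x-2a+b)/\sigma_2,\mu_2/\sigma_2)$; their product is, by the Laplace convolution theorem (the same mechanism underlying \eqref{hproperty2}), the transform of the inner $\tau$-convolution, and a Fubini interchange of the $b$-integral with the inversion then produces exactly the double-integral term of \eqref{density1}. Summing the three inverted pieces and invoking uniqueness of the Laplace transform finishes the proof.

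The main obstacle I anticipate is bookkeeping rather than conceptual: splitting $\tfrac{\delta_2^--\delta_1^-}{\delta_2^++\delta_1^-}$ so that one part matches a shifted Gaussian and the other becomes a $b$-integral of a product of two $h$-transforms, while carefully tracking the exponential factors $e^{\pm 2\mu_2(z-a)/\sigma_2^2}$ that toggle $\delta_2^-$ and $\delta_2^+$. The only genuine analytic point is justifying the Fubini interchange of the $b$-integration with Laplace inversion, which follows from nonnegativity of $h$ and absolute convergence of the $b$-integral (guaranteed by $\delta_1^-+\delta_2^+>0$), valid for all $q>0$.
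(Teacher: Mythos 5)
Your proposal is correct and takes essentially the same approach as the paper: both match the $t$-Laplace transform of \eqref{density1} with \eqref{peq11}, using \eqref{linearlap} for the two Gaussian terms and \eqref{hlap} together with the representation $\frac{1}{\delta_1^-+\delta_2^+}=\int_0^\infty e^{-(\delta_1^-+\delta_2^+)b}\,\mathrm{d}b$ and the Laplace convolution theorem for the double-integral term, concluding by uniqueness of Laplace inversion. The only difference is the direction of the bookkeeping — you decompose \eqref{peq11} via $\frac{\delta_2^--\delta_1^-}{\delta_2^++\delta_1^-}=\frac{\delta_2^-+\delta_2^+}{\delta_2^++\delta_1^-}-1$ and invert each piece, while the paper transforms each piece of \eqref{density1} forward and recombines, which is the identical computation read in reverse.
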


\begin{proof}
Taking a Laplace transform on the convolution, by \eqref{hlap}  we obtain
\begin{align}
&\int_0^\infty qe^{-qt}\frac{2}{\sigma_2^2} \int_0^{\infty} \int_0^t e^{\frac{2\mu_2(z-a)}{\sigma_2^2}} h\left(t-\tau ; \frac{b}{\sigma_1}, -\frac{\mu_1}{\sigma_1}\right) h\left(\tau ;\frac{z+x-2a+b}{\sigma_2},\frac{ \mu_2}{\sigma_2}\right) \mathrm{d} \tau \mathrm{d} b \dd t\nonumber\\
=& \frac{2q}{\sigma_2^2} e^{\frac{2\mu_2(z-a)}{\sigma_2^2}} \int_0^\infty  \int_0^{\infty}e^{-qt} \int_0^t  h\left(t-\tau ; \frac{b}{\sigma_1}, -\frac{\mu_1}{\sigma_1}\right) h\left(\tau ;\frac{z+x-2a+b}{\sigma_2},\frac{ \mu_2}{\sigma_2}\right) \mathrm{d} \tau \dd t \mathrm{d} b \nonumber \\
=& \frac{2q}{\sigma_2^2} e^{\frac{2\mu_2(z-a)}{\sigma_2^2}} \int_0^\infty  \left[\int_0^{\infty}e^{-qt}   h\left(t; \frac{b}{\sigma_1}, -\frac{\mu_1}{\sigma_1}\right) \mathrm{d} t  \int_0^{\infty}e^{-qt}   h\left(t;\frac{ z+x-2a+b}{\sigma_2},\frac{ \mu_2}{\sigma_2}\right) \mathrm{d} t\right] \mathrm{d} b \nonumber\\
=& \frac{2q}{\sigma_2^2} e^{\frac{2\mu_2(z-a)}{\sigma_2^2}} \int_0^\infty \exp{\left[-\frac{\sqrt{2q\sigma_1^2+\mu_1^2}-\mu_1}{\sigma_1^2}b\right]}\exp{\left[-\frac{\sqrt{2q\sigma_2^2+\mu_2^2}+\mu_2}{\sigma_2^2}(z+x-2a+b)\right]} \mathrm{d} b \nonumber\\
=& \frac{2q}{\sigma_2^2} e^{\frac{2\mu_2(z-a)}{\sigma_2^2}} \exp{\left[-\frac{\sqrt{2q\sigma_2^2+\mu_2^2}+\mu_2}{\sigma_2^2}(z-a)\right]}    e^{-\delta_2^+ (x-a)} \int_0^\infty e^{-(\delta_2^+ +\delta_1^-)b}\mathrm{d} b \nonumber\\
=& \frac{2q}{\sigma_2^2(\delta_2^+ +\delta_1^-)} e^{-\delta_2^- (z-a)}e^{-\delta_2^+ (x-a)}.\label{part1}
\end{align}

For the case $z\ge x\ge a$, by \eqref{linearlap}, we can obtain
\begin{align}
&\int_0^\infty qe^{-qt}\frac{1}{\sqrt{2 \pi t\sigma_2^2}} \left[\exp \left(-\frac{\left(z-x-\mu_2t\right)^2}{2 t\sigma_2^2}\right)
-\exp \left(-\frac{\left(z+x-2a-\mu_2t\right)^2}{2 t\sigma_2^2}-\frac{2 \mu_2(x-a)}{\sigma_2^2}\right)\right] \mathrm{d}t \nonumber\\
=& \frac{q}{\sqrt{2q\sigma_2^2+\mu_2^2}}\left[\exp{\left(\frac{\left(\mu_2-\sqrt{2q\sigma_2^2+\mu_2^2}\right)(z-x)}{\sigma_2^2 }\right)}\right.\nonumber\\
&\qquad \qquad\qquad \ \left.-\exp{\left(\frac{\left(\mu_2-\sqrt{2q\sigma_2^2+\mu_2^2}\right)(z+x-2a)}{\sigma_2^2 }\right)}e^{-\frac{2\mu_2(x-a)}{\sigma_2^2}}\right]\nonumber\\
=&\frac{q}{\sqrt{2q\sigma_2^2+\mu_2^2}}\left(e^{\delta_2^-(x-a)}-e^{-\delta_2^+(x-a)}\right) e^{-\delta_2^-(z-a)}.\label{part2}
\end{align}
Combine \eqref{part1} and  \eqref{part2}, by \eqref{peq11}, we have
\begin{align}
&\int_0^\infty qe^{-qt}\left\{\frac{2}{\sigma_2^2} \int_0^{\infty} \int_0^t e^{\frac{2\mu_2(z-a)}{\sigma_2^2}} h\left(t-\tau ; \frac{b}{\sigma_1}, -\frac{\mu_1}{\sigma_1}\right) h\left(\tau ;\frac{z+x-2a+b}{\sigma_2},\frac{ \mu_2}{\sigma_2}\right) \mathrm{d} \tau \mathrm{d} b\right.\nonumber\\
 &\left.+\frac{1}{\sqrt{2 \pi t\sigma_2^2}} \left[\exp \left(-\frac{\left(z-x-\mu_2t\right)^2}{2 t\sigma_2^2}\right)
-\exp \left(-\frac{\left(z+x-2a-\mu_2t\right)^2}{2 t\sigma_2^2}-\frac{2 \mu_2(x-a)}{\sigma_2^2}\right)\right]\right\}\mathrm{d}t \mathrm{d}z\nonumber\\
=&\left[\frac{2q}{\sigma_2^2(\delta_2^+ +\delta_1^-)} e^{-\delta_2^- (z-a)}e^{-\delta_2^+ (x-a)}+\frac{q}{\sqrt{2q\sigma_2^2+\mu_2^2}}\left(e^{\delta_2^-(x-a)}-e^{-\delta_2^+(x-a)}\right) e^{-\delta_2^-(z-a)}\right]\mathrm{d}z\nonumber\\
=&	\frac{qe^{-\delta_2^- (z-a)}}{\sqrt{2q\sigma_2^2+\mu_2^2}}\left[e^{\delta_2^- (x-a)}+\frac{(\delta_2^--\delta_1^-)e^{-\delta_2^+(x-a)}}{\delta_2^++\delta_1^-}\right]
\mathrm{d}z\nonumber\\
=&\mathbb{P}_{x}\left(X_{e_q} \in \mathrm{d}z\right),  \quad  z\ge x\ge a.\label{peq01}
\end{align}

For the case $x\ge z\ge a$, by \eqref{linearlap} we obtain
\begin{align}
	&\int_0^\infty qe^{-qt}\frac{1}{\sqrt{2 \pi t\sigma_2^2}} \left[\exp \left(-\frac{\left(z-x-\mu_2t\right)^2}{2 t\sigma_2^2}\right)
	-\exp \left(-\frac{\left(z+x-2a-\mu_2t\right)^2}{2 t\sigma_2^2}-\frac{2 \mu_2(x-a)}{\sigma_2^2}\right)\right] \mathrm{d}t \nonumber\\
	=&\frac{q}{\sqrt{2q\sigma_2^2+\mu_2^2}}\left[\exp{\left(\frac{\left(\mu_2+\sqrt{2q\sigma_2^2+\mu_2^2}\right)(z-x)}{\sigma_2^2 }\right)}\right.\nonumber\\
	&\qquad \qquad\qquad\ \left.-\exp{\left(\frac{\left(\mu_2-\sqrt{2q\sigma_2^2+\mu_2^2}\right)(z+x-2a)}{\sigma_2^2 }\right)}e^{-\frac{2\mu_2(x-a)}{\sigma_2^2}}\right]\nonumber\\
	=&\frac{q\left(e^{\delta_2^+(z-a)}-e^{-\delta_2^-(z-a)}\right) e^{-\delta_2^+(x-a)}}{\sqrt{2q\sigma_2^2+\mu_2^2}}.\label{part22}
\end{align}
Combine \eqref{part1} and  \eqref{part22}, by \eqref{peq11}, we have
\begin{align}
	&\int_0^\infty qe^{-qt}\left\{\frac{2}{\sigma_2^2} \int_0^{\infty} \int_0^t e^{\frac{2\mu_2(z-a)}{\sigma_2^2}} h\left(t-\tau ; \frac{b}{\sigma_1}, -\frac{\mu_1}{\sigma_1}\right) h\left(\tau ;\frac{z+x-2a+b}{\sigma_2},\frac{ \mu_2}{\sigma_2}\right) \mathrm{d} \tau \mathrm{d} b\right.\nonumber\\
	&\left.+\frac{1}{\sqrt{2 \pi t\sigma_2^2}} \left[\exp \left(-\frac{\left(z-x-\mu_2t\right)^2}{2 t\sigma_2^2}\right)
	-\exp \left(-\frac{\left(z+x-2a-\mu_2t\right)^2}{2 t\sigma_2^2}-\frac{2 \mu_2(x-a)}{\sigma_2^2}\right)\right]\right\}\mathrm{d}t \mathrm{d}z\nonumber\\
	=&\left[\frac{2q}{\sigma_2^2(\delta_2^+ +\delta_1^-)} e^{-\delta_2^- (z-a)}e^{-\delta_2^+ (x-a)}+\frac{q\left(e^{\delta_2^+(z-a)}-e^{-\delta_2^-(z-a)}\right) e^{-\delta_2^+(x-a)}}{\sqrt{2q\sigma_2^2+\mu_2^2}}\right]\mathrm{d}z\nonumber\\
=&\frac{qe^{-\delta_2^+(x-a)}}{\sqrt{2q\sigma_2^2+\mu_2^2}}\left[e^{\delta_2^+ (z-a)}+\frac{(\delta_2^--\delta_1^-)e^{-\delta_2^-(z-a)}}{\delta_2^++\delta_1^-}\right]\mathrm{d}z\nonumber\\
	=&\mathbb{P}_{x}\left(X_{e_q} \in \mathrm{d}z\right), \quad x\ge z\ge a. \label{peq02}
\end{align}
Therefore, \eqref{density1} can be deduced from equations \eqref{laplace},  \eqref{peq01} and  \eqref{peq02}.
\end{proof}

\begin{theorem}\label{densityth2}
For any $z<a\le x$, we have
\begin{align}\label{density2}
\bP_{x}(X_t\in \dd z)=\frac{2}{\sigma_1^2} \int_0^{\infty} \int_0^t e^{\frac{2\mu_1b}{\sigma_1^2}} h\left(t-\tau ; \frac{b-z+a}{\sigma_1}, \frac{\mu_1}{\sigma_1}\right) h\left(\tau ;\frac{x-a+b}{\sigma_2},\frac{ \mu_2}{\sigma_2}\right) \mathrm{d} \tau \mathrm{d} b\mathrm{d}z.
\end{align}
\end{theorem}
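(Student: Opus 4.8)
The plan is to prove \eqref{density2} exactly as Theorem \ref{densityth1} was proved: I would show that multiplying the right-hand side of \eqref{density2} by $qe^{-qt}$ and integrating in $t$ recovers $\bP_x(X_{e_q}\in\dd z)$ for $z<a$, which is given by \eqref{peq5} of Theorem \ref{peqthm2}. Since \eqref{laplace} identifies $\bP_x(X_{e_q}\in\dd z)$ as $q$ times the Laplace transform of the true density, uniqueness of the Laplace transform then forces the candidate density to coincide with the true one. To begin, I would invoke Fubini to interchange the $t$- and $b$-integrations (legitimate because, after absorbing the prefactor $e^{2\mu_1 b/\sigma_1^2}$, the integrand is nonnegative) and observe that the inner integral over $\tau$ is a time convolution of two copies of $h$.

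By the convolution theorem the Laplace transform in $t$ factorizes into a product of two single transforms $\int_0^\infty e^{-qt}h(t;\cdot,\cdot)\,\dd t$, each evaluated through \eqref{hlap}. Here the sign convention in \eqref{hlap} is decisive: since $z<a\le x$ and $b\ge0$, both spatial arguments $\tfrac{b-z+a}{\sigma_1}$ and $\tfrac{x-a+b}{\sigma_2}$ are nonnegative, so \eqref{hlap} always selects the $+$ branch of the square root. Writing $\sqrt{(\mu_i/\sigma_i)^2+2q}=\sqrt{2q\sigma_i^2+\mu_i^2}/\sigma_i$, the first transform becomes $\exp[-\delta_1^+(b-z+a)]$ and the second $\exp[-\delta_2^+(x-a+b)]$. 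Collecting the $b$-independent factors, the whole expression reduces to
\begin{equation*}
\frac{2q}{\sigma_1^2}\,e^{\delta_1^+(z-a)}e^{-\delta_2^+(x-a)}\int_0^\infty \exp\!\left[\left(\tfrac{2\mu_1}{\sigma_1^2}-\delta_1^+-\delta_2^+\right)b\right]\dd b.
\end{equation*}

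A one-line computation gives $\tfrac{2\mu_1}{\sigma_1^2}-\delta_1^+=-\delta_1^-$, so the $b$-integral equals $1/(\delta_1^-+\delta_2^+)$, convergent since $\delta_1^-+\delta_2^+>0$, leaving $\tfrac{2q}{\sigma_1^2(\delta_1^-+\delta_2^+)}e^{\delta_1^+(z-a)}e^{-\delta_2^+(x-a)}$. Comparing with \eqref{peq5}, the remaining task is the constant identity $\tfrac{2}{\sigma_1^2}=\tfrac{\delta_1^++\delta_1^-}{\sqrt{2q\sigma_1^2+\mu_1^2}}$, which is immediate from $\delta_1^++\delta_1^-=2\sqrt{2q\sigma_1^2+\mu_1^2}/\sigma_1^2$. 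The step I expect to require the most care is the sign bookkeeping in \eqref{hlap}: one must verify that the constraint $z<a\le x$ together with $b\ge0$ keeps both arguments of $h$ nonnegative over the entire range of integration, so that no absolute value or $\mathrm{sgn}$ flips inside the $b$-integral and the clean exponential form of the transforms is preserved.
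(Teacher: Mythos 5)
Your proposal is correct and takes essentially the same route as the paper's own proof: Fubini plus the convolution theorem to factorize the $\tau$-convolution, evaluation of each factor via \eqref{hlap} on the $+$ branch (valid since $z<a\le x$ and $b\ge 0$ keep both arguments of $h$ nonnegative), reduction of the $b$-integral to $1/(\delta_1^-+\delta_2^+)$ using $\tfrac{2\mu_1}{\sigma_1^2}-\delta_1^+=-\delta_1^-$, and the final match with \eqref{peq5} through $\delta_1^++\delta_1^-=2\sqrt{2q\sigma_1^2+\mu_1^2}/\sigma_1^2$. All of your sign and constant bookkeeping agrees with the paper's computation.
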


\begin{proof}
Taking a Laplace transform on convolution, by \eqref{hlap}  we  obtain
\begin{align}
&\int_0^\infty qe^{-qt}\frac{2}{\sigma_1^2} \int_0^{\infty} \int_0^t e^{\frac{2\mu_1b}{\sigma_1^2}} h\left(t-\tau ; \frac{b-z+a}{\sigma_1}, \frac{\mu_1}{\sigma_1}\right) h\left(\tau ;\frac{x-a+b}{\sigma_2},\frac{ \mu_2}{\sigma_2}\right) \mathrm{d} \tau \mathrm{d} b \dd t\nonumber\\
=& \frac{2q}{\sigma_1^2}  \int_0^\infty  e^{\frac{2\mu_1b}{\sigma_1^2}} \int_0^{\infty}e^{-qt} \int_0^t  h\left(t-\tau ; \frac{b-z+a}{\sigma_1}, \frac{\mu_1}{\sigma_1}\right) h\left(\tau ;\frac{x-a+b}{\sigma_2},\frac{ \mu_2}{\sigma_2}\right) \mathrm{d} \tau \dd t \mathrm{d} b \nonumber \\
=& \frac{2q}{\sigma_1^2}  \int_0^\infty e^{\frac{2\mu_1b}{\sigma_1^2}} \left[\int_0^{\infty}e^{-qt}   h\left(t; \frac{b-z+a}{\sigma_1}, \frac{\mu_1}{\sigma_1}\right) \mathrm{d} t  \int_0^{\infty}e^{-qt}   h\left(t;\frac{ x-a+b}{\sigma_2},\frac{ \mu_2}{\sigma_2}\right) \mathrm{d} t\right] \mathrm{d} b \nonumber\\
=& \frac{2q}{\sigma_1^2}  \int_0^\infty e^{\frac{2\mu_1b}{\sigma_1^2}}\exp{\left[-\left(\frac{\mu_1}{\sigma_1}+\sqrt{\frac{\mu_1^2}{\sigma_1^2}+2q}\right)\frac{b-z+a}{\sigma_1}\right]}
\exp{\left[-\left(\frac{\mu_2}{\sigma_2}+\sqrt{\frac{\mu_2^2}{\sigma_2^2}+2q}\right)\frac{x-a+b}{\sigma_2}\right]} \mathrm{d} b \nonumber\\
=& \frac{2q}{\sigma_1^2}e^{\delta_1^+(z-a)}    e^{-\delta_2^+ (x-a)} \int_0^\infty e^{-(\delta_2^+ +\delta_1^-)b}\mathrm{d} b \nonumber\\
=&\frac{q}{\sqrt{2q\sigma_1^2+\mu_1^2}}    \frac{\delta_1^++\delta_1^-}{\delta_2^++\delta_1^-} e^{-\delta_2^+(x-a)}e^{\delta_1^+(z-a)} ,\nonumber
\end{align}
which coincides with the right-hand side function in \eqref{peq5}. Therefore, \eqref{density2} can be obtained from Laplace transform  \eqref{laplace}. The proof is completed.
\end{proof}

\begin{theorem} \label{densityth4} 
For any $x\ge a$, we have
\small{\begin{equation}\label{xgea}
		p(t;x,z)=\left\{\begin{aligned}
			&\left\{\frac{2}{\sigma_2^2} \int_0^{\infty} \int_0^{t} e^{\frac{2\mu_2(z-a)}{\sigma_2^2}}e^{\frac{2\mu_1b}{\sigma_1^2}} h\left(t-\tau ; \frac{b}{\sigma_1}, \frac{\mu_1}{\sigma_1}\right) h\left(\tau ;\frac{z+x-2a+b}{\sigma_2},\frac{ \mu_2}{\sigma_2}\right) \mathrm{d} \tau \mathrm{d} b\right.\\
			&\left.\ \  +\frac{1}{\sqrt{2 \pi t\sigma_2^2}} \left[\exp \left(-\frac{\left(z-x-\mu_2t\right)^2}{2 t\sigma_2^2}\right)
			-\exp \left(-\frac{\left(z+x-2a-\mu_2t\right)^2}{2t\sigma_2^2}-\frac{2 \mu_2(x-a)}{\sigma_2^2}\right)\right]\right\}, \quad z\ge a,\\
			&\frac{2}{\sigma_1^2} \int_0^{\infty} \int_0^{t} e^{\frac{2\mu_1b}{\sigma_1^2}} h\left(t-\tau ; \frac{b-z+a}{\sigma_1}, \frac{\mu_1}{\sigma_1}\right) h\left(\tau ;\frac{x-a+b}{\sigma_2},\frac{ \mu_2}{\sigma_2}\right) \mathrm{d} \tau \mathrm{d} b, \quad z<a.
		\end{aligned}\right.
	\end{equation}}
	For $x< a$, we have
\small{
	\begin{equation}\label{xlea}
		p(t;x,z)=\left\{\begin{aligned}
	&\left\{\frac{2}{\sigma_1^2} \int_{-\infty}^{0} \int_0^{t} e^{\frac{2\mu_1(z-a)}{\sigma_1^2}} e^{\frac{2\mu_2b}{\sigma_2^2}} h\left(t-\tau ; \frac{b}{\sigma_2}, \frac{\mu_2}{\sigma_2}\right) h\left(\tau ;\frac{z+x-2a+b}{\sigma_1},\frac{ \mu_1}{\sigma_1}\right) \mathrm{d} \tau \mathrm{d} b\right.\\
			&\left. \ \ +\frac{1}{\sqrt{2 \pi t\sigma_1^2}} \left[\exp \left(-\frac{\left(z-x-\mu_1t\right)^2}{2 t\sigma_1^2}\right)
			-\exp \left(-\frac{\left(z+x-2a-\mu_1t\right)^2}{2t\sigma_1^2}-\frac{2 \mu_1(x-a)}{\sigma_1^2}\right)\right]\right\},\quad z\le a,\\
			&\frac{2}{\sigma_2^2} \int_{-\infty}^{0} \int_0^{t} e^{\frac{2\mu_2b}{\sigma_2^2}} h\left(t-\tau ; \frac{b-z+a}{\sigma_2}, \frac{\mu_2}{\sigma_2}\right) h\left(\tau ;\frac{x-a+b}{\sigma_1},\frac{ \mu_1}{\sigma_1}\right) \mathrm{d} \tau \mathrm{d} b, \quad z> a.
		\end{aligned}\right.
	\end{equation}}
\end{theorem}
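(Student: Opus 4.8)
The plan is to assemble \eqref{xgea} and \eqref{xlea} from the transition laws already computed, using reflection to reduce the case $x<a$ to the case $x\ge a$. For $x\ge a$ there is essentially nothing new to do: the second line of \eqref{xgea} (the regime $z<a$) is verbatim the conclusion \eqref{density2} of Theorem \ref{densityth2}, while the first line (the regime $z\ge a$) is the conclusion \eqref{density1} of Theorem \ref{densityth1} rewritten. The only discrepancy between \eqref{density1} and the first line of \eqref{xgea} is that the inner kernel $h(t-\tau;b/\sigma_1,-\mu_1/\sigma_1)$ in \eqref{density1} is replaced by $e^{2\mu_1 b/\sigma_1^2}\,h(t-\tau;b/\sigma_1,\mu_1/\sigma_1)$; these agree by the identity $h(t;x,-\mu)=h(t;x,\mu)e^{2\mu x}$ recorded in \eqref{hproperty1}, applied with $x=b/\sigma_1$ and $\mu=\mu_1/\sigma_1$. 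Thus the $x\ge a$ half of the theorem is immediate once one reads off that the measures in Theorems \ref{densityth1} and \ref{densityth2} are absolutely continuous with the stated densities.

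For $x<a$ I would invoke the reflection already displayed before Corollary \ref{peqthm3}: the process $Y:=-X$ solves an SDE of exactly the same structural form as \eqref{SDE}, but with threshold $\tilde a=-a$ and coefficients $(\tilde\mu_1,\tilde\sigma_1)=(-\mu_2,\sigma_2)$ below the threshold and $(\tilde\mu_2,\tilde\sigma_2)=(-\mu_1,\sigma_1)$ above it (the behaviour at the single point $-a$ being irrelevant for the law). Since $X_0=x<a$ forces $Y_0=-x>-a=\tilde a$, the point $-x$ lies in the ``starting above the threshold'' regime for $Y$, so the already-proven formula \eqref{xgea} applies to $Y$ with the tilded parameters. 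The pathwise identity $Y=-X$ gives $p(t;x,z)=p_Y(t;-x,-z)$, where $p_Y$ denotes the transition density of $Y$, and substituting $\tilde x=-x$, $\tilde z=-z$, $\tilde a=-a$ into \eqref{xgea} for $Y$ produces \eqref{xlea} after two routine simplifications: one uses $h(t;x,\mu)=h(t;-x,-\mu)$ from \eqref{hproperty1} to flip the signs appearing in the arguments of both $h$-factors, and one changes the integration variable $b\mapsto -b$, which turns the range $\int_0^\infty$ into $\int_{-\infty}^0$ and restores the exponential prefactor $e^{2\mu_2 b/\sigma_2^2}$ to the sign displayed in \eqref{xlea}. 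The Gaussian terms transform in the same way: $z-x-\mu_2 t$, $z+x-2a-\mu_2 t$ and $\mu_2(x-a)$ go over, under $(x,z,a,\mu_2,\sigma_2)\mapsto(-x,-z,-a,-\mu_1,\sigma_1)$ and after an overall sign change inside the squares, to $z-x-\mu_1 t$, $z+x-2a-\mu_1 t$ and $\mu_1(x-a)$, giving exactly the first line of \eqref{xlea}; the regime $z>a$ for $X$ becomes $-z<-a$ for $Y$ and is handled by the second line of \eqref{xgea} in the same fashion.

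The argument carries no new analytic content beyond Theorems \ref{densityth1} and \ref{densityth2}; the only point requiring care is the bookkeeping in the reflection step, namely keeping straight which coefficient pair attaches to which side of the threshold after the swap, tracking the sign flips forced by $h(t;x,\mu)=h(t;-x,-\mu)$ in the kernel arguments, and performing the change of variables $b\mapsto-b$ consistently so that the integration domains and exponential prefactors in \eqref{xlea} come out correctly. I expect this transcription to be the main, and essentially the only, obstacle.
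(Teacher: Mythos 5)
Your proposal is correct and follows essentially the same route as the paper's own proof: for $x\ge a$ it assembles Theorems \ref{densityth1} and \ref{densityth2} via the identity $e^{\frac{2\mu_1 b}{\sigma_1^2}}h\left(t-\tau;\frac{b}{\sigma_1},\frac{\mu_1}{\sigma_1}\right)=h\left(t-\tau;\frac{b}{\sigma_1},\frac{-\mu_1}{\sigma_1}\right)$ from \eqref{hproperty1}, and for $x<a$ it uses exactly the reflection the paper encodes as $p(t;x,z;a,\mu_1,\mu_2,\sigma_1,\sigma_2)=p(t;-x,-z;-a,-\mu_2,-\mu_1,\sigma_2,\sigma_1)$, followed by the substitution $b\mapsto-b$ and another application of \eqref{hproperty1}. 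Your coefficient bookkeeping after the swap (with $(-\mu_2,\sigma_2)$ below and $(-\mu_1,\sigma_1)$ above the threshold $-a$) matches the paper's, so there is no gap.
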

\begin{proof}
For $x\ge a$, the transition density function $p(t;x,z)$ can be obtained from Theorems \ref{densityth1} and \ref{densityth2}, and the follow properties from \eqref{hproperty1},
$$e^{\frac{2\mu_1b}{\sigma_1^2}} h\left(t-\tau ; \frac{b}{\sigma_1}, \frac{\mu_1}{\sigma_1}\right)=h\left(t-\tau ; \frac{b}{\sigma_1}, \frac{-\mu_1}{\sigma_1}\right).$$

In the proof of the case  $x< a$, we highlight the dependence on $a,\mu_1,\mu_2,\sigma_1,\sigma_2$ by writing
$p(t;x,z;a,\mu_1,\mu_2,\sigma_1,\sigma_2)$ instead of $p(t,x,z)$. By symmetry of Brownian motion, we have
\begin{equation}\label{sym}
p(t;x,z;a,\mu_1,\mu_2,\sigma_1,\sigma_2)=p(t,-x,-z;-a,-\mu_2,-\mu_1,\sigma_2,\sigma_1).	
\end{equation}
Substituting \eqref{xgea} into \eqref{sym}, we have
\begin{align}
	&p(t;x,z;a,\mu_1,\mu_2,\sigma_1,\sigma_2)\nonumber\\
=&	\begin{cases}
		\left\{\frac{2}{\sigma_1^2} \int_0^{\infty} \int_0^{t} e^{\frac{2\mu_1(z-a)}{\sigma_1^2}}e^{\frac{-2\mu_2b}{\sigma_2^2}} h\left(t-\tau ; \frac{b}{\sigma_2}, \frac{-\mu_2}{\sigma_2}\right) h\left(\tau ;\frac{-x-z+2a+b}{\sigma_1},\frac{ -\mu_1}{\sigma_1}\right) \mathrm{d} \tau \mathrm{d} b\right.\nonumber\\
		\left. \ \ +\frac{1}{\sqrt{2 \pi t\sigma_1^2}} \left[\exp \left(-\frac{\left(z-x-\mu_1 t\right)^2}{2 t\sigma_1^2}\right)
		-\exp \left(-\frac{\left(z+x-2a-\mu_1t\right)^2}{2t\sigma_1^2}-\frac{2 \mu_1(x-a)}{\sigma_1^2}\right)\right]\right\},\quad -z\ge -a,\\
		\frac{2}{\sigma_2^2} \int_0^{\infty} \int_0^{t} e^{\frac{-2\mu_2b}{\sigma_2^2}} h\left(t-\tau ; \frac{b+z-a}{\sigma_2}, \frac{-\mu_2}{\sigma_2}\right) h\left(\tau ;\frac{-x+a+b}{\sigma_1},\frac{ -\mu_1}{\sigma_1}\right) \mathrm{d} \tau \mathrm{d} b, \quad -z<- a.
	\end{cases}\\
	=&\begin{cases}
		\left\{\frac{2}{\sigma_1^2} \int_{-\infty}^{0} \int_0^{t} e^{\frac{2\mu_1(z-a)}{\sigma_1^2}}e^{\frac{2\mu_2b}{\sigma_2^2}} h\left(t-\tau ; \frac{-b}{\sigma_2}, \frac{-\mu_2}{\sigma_2}\right) h\left(\tau ;\frac{-x-z+2a-b}{\sigma_1},\frac{ -\mu_1}{\sigma_1}\right) \mathrm{d} \tau \mathrm{d} b\right.\nonumber\\
		\left. \ \ +\frac{1}{\sqrt{2 \pi t\sigma_1^2}} \left[\exp \left(-\frac{\left(z-x-\mu_1 t\right)^2}{2 t\sigma_1^2}\right)
		-\exp \left(-\frac{\left(z+x-2a-\mu_1t\right)^2}{2t\sigma_1^2}-\frac{2 \mu_1(x-a)}{\sigma_1^2}\right)\right]\right\},\quad z\le a,\\
		\frac{2}{\sigma_2^2} \int_{-\infty}^0 \int_0^{t} e^{\frac{2\mu_2b}{\sigma_2^2}} h\left(t-\tau ; \frac{-b+z-a}{\sigma_2}, \frac{-\mu_2}{\sigma_2}\right) h\left(\tau ;\frac{-x+a-b}{\sigma_1},\frac{ -\mu_1}{\sigma_1}\right) \mathrm{d} \tau \mathrm{d} b, \quad z> a.
	\end{cases}\\
	=&\begin{cases}
	\left\{\frac{2}{\sigma_1^2} \int_{-\infty}^{0} \int_0^{t} e^{\frac{2\mu_1(z-a)}{\sigma_1^2}}e^{\frac{2\mu_2b}{\sigma_2^2}} h\left(t-\tau ; \frac{b}{\sigma_2}, \frac{\mu_2}{\sigma_2}\right) h\left(\tau ;\frac{z+x-2a+b}{\sigma_1},\frac{ \mu_1}{\sigma_1}\right) \mathrm{d} \tau \mathrm{d} b\right.\nonumber\\
	\left. \ \ +\frac{1}{\sqrt{2 \pi t\sigma_1^2}} \left[\exp \left(-\frac{\left(z-x-\mu_1 t\right)^2}{2 t\sigma_1^2}\right)
	-\exp \left(-\frac{\left(z+x-2a-\mu_1t\right)^2}{2t\sigma_1^2}-\frac{2 \mu_1(x-a)}{\sigma_1^2}\right)\right]\right\},\quad z\le a,\\
	\frac{2}{\sigma_2^2} \int_{-\infty}^0 \int_0^{t} e^{\frac{2\mu_2b}{\sigma_2^2}} h\left(t-\tau ; \frac{b-z+a}{\sigma_2}, \frac{\mu_2}{\sigma_2}\right) h\left(\tau ;\frac{x-a+b}{\sigma_1},\frac{ \mu_1}{\sigma_1}\right) \mathrm{d} \tau \mathrm{d} b, \quad z> a,
\end{cases}
\end{align}
where the second equation is due to variable substitution and the last equation is due to \eqref{hproperty1}. The proof of \eqref{xlea} is completed.
\end{proof}

\begin{remark}
Let process $Z$ satisfy SDE
\begin{eqnarray*}
\dd Z_t=(-\mu_1 \mathbf{1}_{\{Z_t \le a\}}-\mu_2 \mathbf{1}_{\{Z_t> a\}})\dd t+(\sigma_1\mathbf{1}_{\{Z_t\le a\}}+\sigma_2\mathbf{1}_{\{Z_t> a\}})\dd B_t.
\end{eqnarray*}
Then process $Z$ has the dynamics of $X$ running backwards in time, and it is interesting to know whether
\begin{equation}\label{time_r}
p_{X}(t;x,z)=p_{Z}(t;z,x)\text{ for all } t>0 \text{ and } x, z\in \mathbb{R}.
\end{equation}
By Theorem \ref{densityth4} it is easy to verify that (\ref{time_r}) holds if and only if  $\mu_1=\mu_2$ and $\sigma_1=\sigma_2$.
\end{remark}

The following two corollaries establish the continuity of the transition density function with respect to both the terminal state $z$, and the initial state  $x$.

\begin{corollary}\label{cor1}
The transition density function $	p(t;x,z)$ is always  continuous with respect to  $z$ on $\mathbb{R}\setminus \{a\}$, and
\begin{align}
&	p(t;x,a+)-p(t;x,a-)\nonumber	\\
=&\begin{cases}
2\left(\frac{1}{\sigma_2^2}-\frac{1}{\sigma_1^2}\right) \int_0^{\infty} \int_0^{t} e^{\frac{2\mu_1b}{\sigma_1^2}} h\left(t-\tau ; \frac{b}{\sigma_1}, \frac{\mu_1}{\sigma_1}\right) h\left(\tau ;\frac{x-a+b}{\sigma_2},\frac{ \mu_2}{\sigma_2}\right) \mathrm{d} \tau \mathrm{d} b, \quad x\ge a,\\
2\left(\frac{1}{\sigma_2^2}-\frac{1}{\sigma_1^2}\right) \int_{-\infty}^0 \int_0^{t} e^{\frac{2\mu_2b}{\sigma_2^2}} h\left(t-\tau ; \frac{b}{\sigma_2}, \frac{\mu_2}{\sigma_2}\right) h\left(\tau ;\frac{x-a+b}{\sigma_1},\frac{ \mu_1}{\sigma_1}\right) \mathrm{d} \tau \mathrm{d} b, \quad x< a,
	\end{cases}\label{jump1}\\
=&2\left(\frac{1}{\sigma_2^2}-\frac{1}{\sigma_1^2}\right) \int_{(a-x)^+}^{\infty} \int_0^{t} e^{\frac{2\mu_1b}{\sigma_1^2}} h\left(t-\tau ; \frac{b}{\sigma_1}, \frac{\mu_1}{\sigma_1}\right) h\left(\tau ;\frac{x-a+b}{\sigma_2},\frac{ \mu_2}{\sigma_2}\right) \mathrm{d} \tau \mathrm{d} b.\label{jump2}
\end{align}
Furthermore, the transition density function $	p(t;x,z)$ is continuous with respect to $z$ on $\mathbb{R}$ if and only if $\sigma_1=\sigma_2$.
	\end{corollary}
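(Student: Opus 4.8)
The plan is to read the two one-sided limits of $p(t;x,z)$ at $z=a$ directly off the explicit formulas \eqref{xgea}--\eqref{xlea}, to reduce their difference to a single double integral, and then to decide when that integral vanishes. Continuity of $p(t;x,\cdot)$ on $\mathbb{R}\setminus\{a\}$ comes first and is routine: on each open half-line $(a,\infty)$ and $(-\infty,a)$ the density is a fixed branch of \eqref{xgea} or \eqref{xlea} whose integrand is a product of the smooth kernels $h$ from \eqref{h} and, where present, Gaussian factors, all continuous in $z$; since these kernels admit integrable majorants that are uniform for $z$ in a compact subinterval, dominated convergence lets me pass the limit under the integral sign. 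Hence the only candidate for a discontinuity is $z=a$.

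Next I compute the two one-sided limits. Take $x\ge a$ and let $z\to a+$ in the $z\ge a$ branch of \eqref{xgea}: the prefactor $e^{2\mu_2(z-a)/\sigma_2^2}\to1$ and the kernel argument $z+x-2a+b\to x-a+b$, while the Gaussian bracket vanishes. The latter is the key algebraic cancellation: writing $w:=x-a$, one checks $(w-\mu_2 t)^2+4\mu_2 wt=(w+\mu_2 t)^2$, so that at $z=a$ the two exponentials
$$\exp\!\Bigl(-\tfrac{(w+\mu_2 t)^2}{2t\sigma_2^2}\Bigr)\quad\text{and}\quad\exp\!\Bigl(-\tfrac{(w-\mu_2 t)^2}{2t\sigma_2^2}-\tfrac{2\mu_2 w}{\sigma_2^2}\Bigr)$$
coincide and cancel. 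This leaves $p(t;x,a+)$ as the double integral with prefactor $2/\sigma_2^2$. Letting $z\to a-$ in the $z<a$ branch of \eqref{xgea} gives the \emph{same} double integral (now $b-z+a\to b$) but with prefactor $2/\sigma_1^2$, which is $p(t;x,a-)$. Subtracting yields precisely the $x\ge a$ case of \eqref{jump1}; the analogous substitution into \eqref{xlea}, using the same cancellation of the Gaussian bracket with $(\mu_1,\sigma_1)$, produces the $x<a$ case.

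It then remains to merge the two cases of \eqref{jump1} into the single expression \eqref{jump2}. For $x\ge a$ this is immediate because $(a-x)^+=0$. For $x<a$ I would show that the $x<a$ integral in \eqref{jump1} equals the integral in \eqref{jump2} (with lower limit $a-x$) by taking Laplace transforms in $t$: each double integral is a convolution in $t$, so by \eqref{hlap} its transform factorizes into a product of two exponentials in $b$, which I then integrate. After collecting the $b$-exponentials---on the left $2\mu_2/\sigma_2^2+\delta_2^-=\delta_2^+$ together with $\delta_1^-$ from the second factor, and on the right $2\mu_1/\sigma_1^2-\delta_1^+=-\delta_1^-$ together with $\delta_2^+$---both transforms reduce to $e^{\delta_1^-(x-a)}/(\delta_1^-+\delta_2^+)$. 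Since both integrals are continuous in $t$, uniqueness of the Laplace transform gives their equality. This Laplace identity is the main technical step; the rest is bookkeeping.

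Finally, I use positivity. The kernel $h(s;y,\nu)$ is strictly positive for $y\neq0$ and $s>0$, and on the integration region of \eqref{jump2}, namely $b>(a-x)^+$ and $0<\tau<t$, both arguments $b/\sigma_1>0$ and $(x-a+b)/\sigma_2>0$; hence the double integral is strictly positive for every $t>0$ and every $x$. Consequently the jump equals $2(\sigma_2^{-2}-\sigma_1^{-2})$ times a strictly positive quantity, so it vanishes for one, equivalently all, $(t,x)$ if and only if $\sigma_1=\sigma_2$. Combined with the continuity on $\mathbb{R}\setminus\{a\}$ from the first step, this shows $p(t;x,\cdot)$ is continuous on all of $\mathbb{R}$ exactly when $\sigma_1=\sigma_2$, completing the proof.
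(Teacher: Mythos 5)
Your proposal is correct. The paper's own proof is much terser and, on the one step it actually writes out, takes a different route: it treats \eqref{jump1} as immediate from Theorem \ref{densityth4} (omitting the one-sided limit computation), and proves \eqref{jump2} for $x<a$ purely by substitution --- replacing $b\mapsto -b$ and $\tau\mapsto t-\tau$, applying the reflection identity \eqref{hproperty1} to both kernels, and then shifting $b\mapsto b+a-x$, which moves the lower limit to $(a-x)^+$ and collects the exponential factors into $e^{2\mu_1 b/\sigma_1^2}$. You instead prove the same identity via Laplace transform in $t$: using \eqref{hlap} and Tonelli, both double integrals transform to $e^{\delta_1^-(x-a)}/(\delta_1^-+\delta_2^+)$ --- your exponent bookkeeping $2\mu_2/\sigma_2^2+\delta_2^-=\delta_2^+$ and $2\mu_1/\sigma_1^2-\delta_1^+=-\delta_1^-$ checks out --- and uniqueness of the transform (requiring continuity in $t$, which you rightly flag) finishes the step. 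Both arguments are valid; the paper's change of variables is more elementary and self-contained, while yours recycles machinery already central to the paper and doubles as a consistency check against the potential-measure formulas of Section 3. You also supply two things the paper leaves unproved: the explicit cancellation $(w-\mu_2 t)^2+4\mu_2 wt=(w+\mu_2 t)^2$ (with $w=x-a$) showing the Gaussian bracket in \eqref{xgea} vanishes at $z=a$, which yields \eqref{jump1}, and the strict-positivity argument for the kernel product on $\{b>(a-x)^+,\,0<\tau<t\}$, which delivers the ``if and only if $\sigma_1=\sigma_2$'' claim that the corollary states but whose proof the paper omits.
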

\begin{proof}
The equation \eqref{jump1} can be easily deduced from Theorem \ref{densityth4}, we omit the proof. For \eqref{jump2}, we only need to prove the case $x<a$. By \eqref{jump1} and \eqref{hproperty1} , we have
\begin{align*}
	&	p(t;x,a+)-p(t;x,a-)\nonumber	\\
=&2\left(\frac{1}{\sigma_2^2}-\frac{1}{\sigma_1^2}\right) \int_0^{\infty} \int_0^{t} e^{\frac{-2\mu_2b}{\sigma_2^2}} h\left(\tau ; \frac{-b}{\sigma_2}, \frac{\mu_2}{\sigma_2}\right) h\left(t-\tau ;\frac{x-b-a}{\sigma_1},\frac{ \mu_1}{\sigma_1}\right) \mathrm{d} \tau \mathrm{d} b\\
=&2\left(\frac{1}{\sigma_2^2}-\frac{1}{\sigma_1^2}\right) \int_0^{\infty} \int_0^{t} e^{\frac{-2\mu_2b}{\sigma_2^2}} h\left(\tau ; \frac{b}{\sigma_2}, \frac{\mu_2}{\sigma_2}\right)e^{\frac{2\mu_2b}{\sigma_2^2}}  h\left(t-\tau ;\frac{-x+b+a}{\sigma_1},\frac{ \mu_1}{\sigma_1}\right)e^{\frac{2\mu_1(-x+b+a)}{\sigma_1^2}}  \mathrm{d} \tau \mathrm{d} b\\
		=&2\left(\frac{1}{\sigma_2^2}-\frac{1}{\sigma_1^2}\right) \int_{(a-x)^+}^{\infty} \int_0^{t} e^{\frac{2\mu_1b}{\sigma_1^2}} h\left(t-\tau ; \frac{b}{\sigma_1}, \frac{\mu_1}{\sigma_1}\right) h\left(\tau ;\frac{x-a+b}{\sigma_2},\frac{ \mu_2}{\sigma_2}\right) \mathrm{d} \tau \mathrm{d} b.
\end{align*}
	\end{proof}

\begin{corollary}\label{cor2}
For any fixed $z\in\mathbb{R}\setminus \{a\}$, the transition density function $	p(t;x,z)$ is always  continuous with respect to  $x$ on $\mathbb{R}$.
\end{corollary}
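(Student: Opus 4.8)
The plan is to read $p(t;x,z)$ directly off Theorem~\ref{densityth4} and to treat the two open half-lines $x>a$ and $x<a$ separately from the single junction point $x=a$. Fix $z\in\mathbb{R}\setminus\{a\}$. On either open region the relevant branch of \eqref{xgea} or \eqref{xlea} is a double integral $\int\!\int_0^t(\cdots)\,\mathrm{d}\tau\,\mathrm{d}b$ of a function that is jointly continuous in $(x,b,\tau)$, plus, when $z$ lies on the same side of $a$ as $x$, finitely many Gaussian terms that are visibly continuous in $x$. Continuity of the double integral in $x$ follows from dominated convergence on a bounded $x$-neighbourhood: the Gaussian decay in $b$ of the factor $h(t-\tau;b/\sigma_i,\cdot)$ dominates the exponential weight $e^{2\mu_i b/\sigma_i^2}$ and yields a $b$-integrable majorant uniform in $x$. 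This settles continuity at every $x\neq a$, so the whole content of the corollary is continuity at $x=a$.

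By the Brownian symmetry identity \eqref{sym} it suffices to treat $z>a$; the case $z<a$ follows upon replacing $(x,z,a,\mu_1,\mu_2,\sigma_1,\sigma_2)$ by $(-x,-z,-a,-\mu_2,-\mu_1,\sigma_2,\sigma_1)$, which sends it to the already-handled configuration. I would compute the two one-sided limits and show both equal $p(t;a,z)$. Approaching from $x\downarrow a$ via the first branch of \eqref{xgea}, note that at $x=a$ one has $z-x-\mu_2 t=z+x-2a-\mu_2 t$ together with $e^{-2\mu_2(x-a)/\sigma_2^2}=1$, so the two exponentials in the bracketed Gaussian term coincide and cancel. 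Hence $\lim_{x\downarrow a}p(t;x,z)$ equals the double integral in \eqref{xgea} evaluated at $x=a$, which is exactly $p(t;a,z)$.

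It remains to show the limit from $x\uparrow a$, taken from the second branch of \eqref{xlea}, gives the same value. Setting $x=a$ there produces $\tfrac{2}{\sigma_2^2}\int_{-\infty}^0\int_0^t e^{2\mu_2 b/\sigma_2^2}h\!\left(t-\tau;\tfrac{b-z+a}{\sigma_2},\tfrac{\mu_2}{\sigma_2}\right)h\!\left(\tau;\tfrac{b}{\sigma_1},\tfrac{\mu_1}{\sigma_1}\right)\mathrm{d}\tau\,\mathrm{d}b$. I would transform this exactly as in the derivation of \eqref{jump2}: substitute $b\mapsto-b$ to move the integration onto $(0,\infty)$, apply the reflection identity $h(s;-y,\mu)=h(s;y,\mu)e^{2\mu y}$ from \eqref{hproperty1} to both $h$-factors, and collect the resulting exponentials into $e^{2\mu_2(z-a)/\sigma_2^2}e^{2\mu_1 b/\sigma_1^2}$; a final substitution $\tau\mapsto t-\tau$ interchanges the two $h$-factors and reproduces the double integral of \eqref{xgea} at $x=a$. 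The two one-sided limits therefore agree with $p(t;a,z)$, giving continuity at $x=a$. The only genuinely delicate point I anticipate is justifying the uniform $b$-domination that licenses passing the limit inside the integral near $x=a$; beyond that, the reflect-and-swap bookkeeping is identical to what was already carried out for \eqref{jump2}, so I expect no further conceptual obstacle.
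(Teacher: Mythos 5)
Your proposal is correct and takes essentially the same route as the paper's own proof: continuity away from $x=a$ is read off the formulas, and at $x=a$ the two one-sided limits are matched by the substitution $b\mapsto -b$, the reflection identity \eqref{hproperty1}, and the swap $\tau\mapsto t-\tau$, exactly the computation of $p(t;a+,z)=p(t;a-,z)$ in the paper. Your explicit dominated-convergence justification and the use of the symmetry \eqref{sym} to reduce $z<a$ to $z>a$ are only minor elaborations of steps the paper treats as obvious or proves ``similarly.''
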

\begin{proof}
If $z>a$, by  Theorem \ref{densityth4}, it is obvious that  $	p(t;x,z)$ is  continuous with respect to  $x$ on $\mathbb{R}\setminus \{a\}$. We just need to prove  $	p(t;x,z)$ is  continuous with respect to  $x$ on $a$. On one hand,
\begin{equation*}
p(t;a+,z)=\frac{2}{\sigma_2^2} \int_0^{\infty} \int_0^{t} e^{\frac{2\mu_2(z-a)}{\sigma_2^2}}e^{\frac{2\mu_1b}{\sigma_1^2}} h\left(t-\tau ; \frac{b}{\sigma_1}, \frac{\mu_1}{\sigma_1}\right) h\left(\tau ;\frac{b+z-a}{\sigma_2},\frac{ \mu_2}{\sigma_2}\right) \mathrm{d} \tau \mathrm{d} b.
\end{equation*}
On the other hand, it follows from  Theorem \ref{densityth4} and equation \eqref{hproperty1} that
	\begin{align*}
	p(t;a-,z)
=&\frac{2}{\sigma_2^2} \int_{-\infty}^{0} \int_0^{t} e^{\frac{2\mu_2b}{\sigma_2^2}} h\left(t-\tau ; \frac{b-z+a}{\sigma_2}, \frac{\mu_2}{\sigma_2}\right) h\left(\tau ;\frac{b}{\sigma_1},\frac{ \mu_1}{\sigma_1}\right) \mathrm{d} \tau \mathrm{d} b\\
=&\frac{2}{\sigma_2^2} \int_{0}^{\infty} \int_0^{t} e^{\frac{-2\mu_2b}{\sigma_2^2}} h\left(\tau ; \frac{-b-z+a}{\sigma_2}, \frac{\mu_2}{\sigma_2}\right) h\left(t-\tau ;\frac{-b}{\sigma_1},\frac{ \mu_1}{\sigma_1}\right) \mathrm{d} \tau \mathrm{d} b\\
=&\frac{2}{\sigma_2^2} \int_{0}^{\infty} \int_0^{t} e^{\frac{-2\mu_2b}{\sigma_2^2}} h\left(\tau ; \frac{b+z-a}{\sigma_2}, \frac{\mu_2}{\sigma_2}\right)e^{\frac{2\mu_2(b+z-a)}{\sigma_2^2}} h\left(t-\tau ;\frac{b}{\sigma_1},\frac{ \mu_1}{\sigma_1}\right) e^{\frac{2\mu_1b}{\sigma_1^2}} \mathrm{d} \tau \mathrm{d} b\\
=&p(t;a+,z).
	\end{align*}
Thus, $	p(t;x,z)$ is continuous with respect to  $x$ on $\mathbb{R}$. It can be proved similarly for $z<a$.
\end{proof}

The diffusion process $X$ exhibits a stationary distribution, as demonstrated by the following proposition, when the parameters $\mu_1$ and $\mu_2$ in equation \eqref{SDE} satisfy $\mu_1>0$ and $\mu_2<0$.

\begin{proposition}\label{cor0} If $\mu_1>0$ and $\mu_2<0$ in  \eqref{SDE}, then
\begin{eqnarray}\label{limdensity}
\lim_{t\to \infty}p(t;x,z)=
	\begin{cases}
		\frac{-\mu_1\mu_2}{\mu_1-\mu_2} \frac{2}{\sigma_2^2}e^{\frac{2\mu_2 (z-a)}{\sigma_2^2}},  \quad   z\ge a,\\
	\frac{-\mu_1\mu_2}{\mu_1-\mu_2} \frac{2}{\sigma_1^2}e^{\frac{2\mu_1 (z-a)}{\sigma_1^2}}, \quad  z< a.
	\end{cases}
\end{eqnarray}
\end{proposition}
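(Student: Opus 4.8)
The plan is to avoid taking the limit $t\to\infty$ directly in the explicit density of Theorem \ref{densityth4}, and instead to pass through the potential measure, whose density we already have in closed form. Writing $u_q(x,z):=q\int_0^\infty e^{-qt}p(t;x,z)\,\mathrm{d}t$, relation \eqref{laplace} says precisely that $\mathbb{P}_x(X_{e_q}\in\mathrm{d}z)=u_q(x,z)\,\mathrm{d}z$, so $u_q(x,z)$ is given by Theorems \ref{peqthm}, \ref{peqthm2} and Corollary \ref{peqthm3}. By the final value (Abelian) theorem, once we know that $\lim_{t\to\infty}p(t;x,z)$ exists it must equal $\lim_{q\downarrow0}u_q(x,z)$, since $q\int_0^\infty e^{-qt}f(t)\,\mathrm{d}t\to\lim_{t\to\infty}f(t)$ whenever the latter limit exists. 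Hence the proposition reduces to computing $\lim_{q\downarrow0}u_q(x,z)$ from the explicit expressions.

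The computation rests on the small-$q$ asymptotics of the four exponents. Since $\mu_1>0$ and $\mu_2<0$, expanding the square roots gives, as $q\downarrow0$,
\begin{align*}
\delta_1^-\sim\frac{q}{\mu_1},\quad \delta_2^+\sim\frac{q}{-\mu_2},\quad \delta_1^-+\delta_2^+\sim\frac{\mu_1-\mu_2}{-\mu_1\mu_2}\,q,
\end{align*}
while $\delta_1^+\to 2\mu_1/\sigma_1^2$ and $\delta_2^-\to -2\mu_2/\sigma_2^2$. Substituting these into \eqref{peq11} for $z\ge a$, the dominant contribution comes from the term carrying the factor $(\delta_2^++\delta_1^-)^{-1}=O(q^{-1})$; the prefactor $q/\sqrt{2q\sigma_2^2+\mu_2^2}\sim q/(-\mu_2)$ cancels this pole, the residual $x$-dependence disappears through $e^{-\delta_2^+(x-a)}\to1$, and one is left with $\tfrac{-\mu_1\mu_2}{\mu_1-\mu_2}\tfrac{2}{\sigma_2^2}e^{2\mu_2(z-a)/\sigma_2^2}$. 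The same cancellation applied to \eqref{peq5} yields $\tfrac{-\mu_1\mu_2}{\mu_1-\mu_2}\tfrac{2}{\sigma_1^2}e^{2\mu_1(z-a)/\sigma_1^2}$ for $z<a$, and the case $x<a$ (Corollary \ref{peqthm3}) produces the identical pair of expressions, confirming that the limit is independent of the starting point $x$, as a stationary density must be. As a check, integrating the candidate density over $\{z\ge a\}$ and $\{z<a\}$ gives $\mu_1/(\mu_1-\mu_2)$ and $-\mu_2/(\mu_1-\mu_2)$, which sum to $1$.

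The delicate point is not the evaluation but the justification that $\lim_{t\to\infty}p(t;x,z)$ exists, since the Abelian theorem supplies only the direction ``limit exists $\Rightarrow$ equals $\lim_{q\downarrow0}u_q$''. I would close this gap by invoking ergodicity: with $\mu_1>0$ below $a$ and $\mu_2<0$ above $a$ the drift is mean-reverting toward $a$, so $X$ is positive recurrent and admits a unique invariant probability density $\pi$ with $p(t;x,\cdot)\to\pi$ as $t\to\infty$; the final value theorem then forces $\pi$ to coincide with the function computed above. Alternatively one may verify the stationary Fokker--Planck equation $\tfrac12(\sigma^2\pi)''-(\mu\pi)'=0$ directly, with the smooth matching at $a$ and the normalization above, and cite a standard convergence theorem for one-dimensional ergodic diffusions. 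I would avoid the purely direct route of letting $t\to\infty$ inside the double integral of Theorem \ref{densityth4}: there the inner convolution of first-passage densities tends to $0$ pointwise in the auxiliary variable $b$, so the nonzero limit arises only from the non-uniform accumulation of mass in the $b$-integral, and making this rigorous is more troublesome than the Laplace-transform argument, which sidesteps the interchange of limit and integration entirely.
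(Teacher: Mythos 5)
Your proposal follows essentially the same route as the paper: the paper likewise identifies $\lim_{t\to\infty}p(t;x,z)$ with $\lim_{q\to 0} q\int_0^\infty e^{-qt}p(t;x,z)\,\mathrm{d}t$ via the Terminal-Value Theorem (Theorem 2.36 of Schiff) and then computes the $q\downarrow 0$ limit of the explicit potential densities from Theorems \ref{peqthm}, \ref{peqthm2} and Corollary \ref{peqthm3}, with the same key cancellation $\lim_{q\to 0} q/(\delta_2^+ +\delta_1^-)=-\mu_1\mu_2/(\mu_1-\mu_2)$. Your one substantive addition is the observation that the final-value theorem is Abelian---it presupposes that $\lim_{t\to\infty}p(t;x,z)$ exists---and your proposed patch (positive recurrence of $X$ under the mean-reverting drift, or direct verification of the stationary Fokker--Planck equation with matching at $a$) addresses a point the paper's proof leaves implicit.
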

\begin{proof}
Based on the Terminal-Value Theorem (Theorem 2.36 in Schiff \cite{Schiff})  and equation \eqref{laplace}, we have
$$\lim_{t\to \infty}\bP_x(X_t\in \mathrm{d}z))=\lim_{q\to 0}\bP_{x}(X_{e_q}\in \dd z).
$$
Therefore, in order to prove equation \eqref{limdensity}, it is sufficient to establish the following equation \eqref{qto0}:
\begin{eqnarray}\label{qto0}
\lim_{q\to 0}\bP_{x}(X_{e_q}\in \dd z)=
	\begin{cases}
		\frac{-\mu_1\mu_2}{\mu_1-\mu_2} \frac{2}{\sigma_2^2}e^{\frac{2\mu_2 (z-a)}{\sigma_2^2}}\mathrm{d}z,  \quad   z\ge a,\\
	\frac{-\mu_1\mu_2}{\mu_1-\mu_2} \frac{2}{\sigma_1^2}e^{\frac{2\mu_1 (z-a)}{\sigma_1^2}}\mathrm{d}z, \quad  z< a.
	\end{cases}
\end{eqnarray}

Since $\mu_1>0$ and $\mu_2<0$, we have
\begin{eqnarray}
\lim_{q\to 0}\frac{q}{\delta_2^++\delta_1^-}&=&\lim_{q\to 0}\frac{q}{\frac{\sqrt{2q\sigma_2^2+\mu_2^2}+\mu_2}{\sigma_2^2} +\frac{\sqrt{2q\sigma_1^2+\mu_1^2}-\mu_1}{\sigma_1^2}}\nonumber\\
&=&\lim_{q\to 0} \frac{1}{\frac{2\sigma_2^2}{2\sigma_2^2\sqrt{2q\sigma_2^2+\mu_2^2}} +\frac{2\sigma_1^2}{2\sigma_1^2\sqrt{2q\sigma_1^2+\mu_1^2}}}\nonumber\\
&=&\frac{-\mu_1\mu_2}{\mu_1-\mu_2}.\label{lim}
\end{eqnarray}
For $z\ge x\ge a$, it can be deduced from Theorem \ref{peqthm} and \eqref{lim} that
\begin{eqnarray*}
\lim_{q\to 0}\bP_{x}(X_{e_q}\in \dd z)
&=&\lim_{q\to 0}\frac{qe^{-\delta_2^- (z-a)}}{\sqrt{2q\sigma_2^2+\mu_2^2}}\left[e^{\delta_2^- (x-a)}+\frac{(\delta_2^--\delta_1^-)e^{-\delta_2^+(x-a)}}{\delta_2^++\delta_1^-}\right]\mathrm{d}z   \\
&=&\lim_{q\to 0}\frac{qe^{\frac{2\mu_2 (z-a)}{\sigma_2^2}}}{-\mu_2}\left[\frac{\frac{-2\mu_2}{\sigma_2^2}}{\delta_2^++\delta_1^-}\right]\mathrm{d}z   \\	
&=&\frac{-\mu_1\mu_2}{\mu_1-\mu_2} \frac{2}{\sigma_2^2}e^{\frac{2\mu_2 (z-a)}{\sigma_2^2}}\mathrm{d}z.
\end{eqnarray*}
Similarly, for $x\ge z\ge a$, it can be deduced from Theorem \ref{peqthm} and \eqref{lim} that
\begin{eqnarray*}
\lim_{q\to 0}\bP_{x}(X_{e_q}\in \dd z)
&=&\lim_{q\to 0}\frac{qe^{-\delta_2^+(x-a)}}{\sqrt{2q\sigma_2^2+\mu_2^2}}\left[e^{\delta_2^+ (z-a)}+\frac{(\delta_2^--\delta_1^-)e^{-\delta_2^-(z-a)}}{\delta_2^++\delta_1^-}\right]\mathrm{d}z   \\
&=&\lim_{q\to 0}\frac{q}{-\mu_2}\left[\frac{\frac{-2\mu_2}{\sigma_2^2}e^{\frac{2\mu_2}{\sigma_2^2}(z-a)}}{\delta_2^++\delta_1^-}\right]\mathrm{d}z   \\	
&=&\frac{-\mu_1\mu_2}{\mu_1-\mu_2} \frac{2}{\sigma_2^2}e^{\frac{2\mu_2 (z-a)}{\sigma_2^2}}\mathrm{d}z.
\end{eqnarray*}
For $x\ge a, z< a$, it can be deduced from Theorem \ref{peqthm2} and \eqref{lim} that
\begin{eqnarray*}
\lim_{q\to 0}\bP_{x}(X_{e_q}\in \dd z)
&=&\lim_{q\to 0}\frac{q}{\sqrt{2q\sigma_1^2+\mu_1}}    \frac{\delta_1^++\delta_1^-}{\delta_2^++\delta_1^-} e^{-\delta_2^+(x-a)}e^{\delta_1^+(z-a)}  \mathrm{d}z\\
&=&\lim_{q\to 0}\frac{q}{\mu_1}    \frac{\frac{2\mu_1}{\sigma_1^2}}{\delta_2^++\delta_1^-} e^{\frac{2\mu_1}{\sigma_1^2}(z-a)}  \mathrm{d}z\\
&=&\frac{-\mu_1\mu_2}{\mu_1-\mu_2} \frac{2}{\sigma_1^2}e^{\frac{2\mu_1 (z-a)}{\sigma_1^2}}\mathrm{d}z.
\end{eqnarray*}
For $x<a$, \eqref{qto0} can be proved from Corollary \ref{peqthm3} similarly, we omit the details.

Therefore, we complete the proof of this proposition.
\end{proof}

\section{Examples}
In this section we present several examples to recover the previous results.
 For better understanding of the transition density function, we also provide graphs to illustrate its behaviors.

{\it Example} 5.1.
If $\sigma_1^2=\sigma_2^2=\sigma^2$, then process $X$ solves SDE
 $$\mathrm{d}X_t=(\mu_1 \mathbf{1}_{\{X_t\le a\}}+\mu_2 \mathbf{1}_{\{X_t>a\}})\dd t+\sigma\dd B_t.$$
By Theorem \ref{densityth4}, for $x\ge a$ we have
\begin{align}
	p(t;x,z)=\begin{cases}
		\left\{\frac{2}{\sigma^2} \int_0^{\infty} \int_0^{t} e^{\frac{2\mu_2(z-a)}{\sigma^2}}e^{\frac{2\mu_1b}{\sigma^2}} h\left(t-\tau ; \frac{b}{\sigma}, \frac{\mu_1}{\sigma}\right) h\left(\tau ;\frac{z+x-2a+b}{\sigma},\frac{ \mu_2}{\sigma}\right) \mathrm{d} \tau \mathrm{d} b\right.\nonumber\\
		\left. \ \ +\frac{1}{\sqrt{2 \pi t\sigma^2}} \left[\exp \left(-\frac{\left(z-x-\mu_2 t\right)^2}{2 t\sigma^2}\right)
		-\exp \left(-\frac{\left(z+x-2a-\mu_2t\right)^2}{2t\sigma^2}-\frac{2 \mu_2(x-a)}{\sigma^2}\right)\right]\right\}, z\ge a,\\
		\frac{2}{\sigma^2} \int_0^{\infty} \int_0^{t} e^{\frac{2\mu_1b}{\sigma^2}} h\left(t-\tau ; \frac{b-z+a}{\sigma}, \frac{\mu_1}{\sigma}\right) h\left(\tau ;\frac{x-a+b}{\sigma},\frac{ \mu_2}{\sigma}\right) \mathrm{d} \tau \mathrm{d} b, \quad z<a,
	\end{cases}
\end{align}
which recovers the result of Karatzas and Shreve  \cite{Karatzas}.

{\it Example} 5.2. If $\mu_1=\mu_2=0$, then
$$\mathrm{d}X_t=(\sigma_1\mathbf{1}_{\{X_t\le a\}}+\sigma_2\mathbf{1}_{\{X_t>a\}})\dd B_t	.$$
We only consider the case $x\ge a$ since it is similar for $x<a$.
For $z\ge a$, by \eqref{hproperty2} and \eqref{h}, we have
\begin{align*}
&\frac{2}{\sigma_2^2} \int_0^{\infty} \int_0^{t}  h\left(t-\tau ; \frac{b}{\sigma_1}, 0\right) h\left(\tau ;\frac{z+x-2a+b}{\sigma_2},0\right) \mathrm{d} \tau \mathrm{d} b \\
=&\frac{2}{\sigma_2^2} \int_0^{\infty}  h\left(t ; \frac{b}{\sigma_1}+\frac{z+x-2a+b}{\sigma_2},0\right)  \mathrm{d} b\\
=&\frac{2}{\sigma_2^2} \int_0^{\infty}  \frac{\frac{b}{\sigma_1}+\frac{z+x-2a+b}{\sigma_2}}{\sqrt{2\pi t^3}}\exp\left(-\frac{\left(\frac{b}{\sigma_1}+\frac{z+x-2a+b}{\sigma_2}\right)^2}{2t} \right)\mathrm{d} b\\
=&\frac{\sigma_1}{\sigma_2}\frac{1}{\sigma_1+\sigma_2} \int_{\frac{(z+x-2a)^2}{\sigma_2^2}}^{\infty}  \frac{e^{-\frac{v}{2t}}}{\sqrt{2\pi t^3}}\mathrm{d}v\\
=&\frac{2\sigma_1}{(\sigma_1+\sigma_2)\sqrt{2\pi t}\sigma_2}\exp\left(-\frac{(z+x-2a)^2}{2t\sigma_2^2}\right).
\end{align*}
Similarly, for $z<a$, combining \eqref{hproperty2} and \eqref{h} we have
\begin{align*}
&	\frac{2}{\sigma_1^2} \int_0^{\infty} \int_0^{t}  h\left(t-\tau ; \frac{b-z+a}{\sigma_1}, 0\right) h\left(\tau ;\frac{x-a+b}{\sigma_2},0\right) \mathrm{d} \tau \mathrm{d} b\\
=&\frac{2}{\sigma_1^2} \int_0^{\infty}  h\left(t; \frac{b-z+a}{\sigma_1}+\frac{x-a+b}{\sigma_2}, 0\right)  \mathrm{d} b\\
=&\frac{2}{\sigma_1^2} \int_0^{\infty}
\frac{\frac{b-z+a}{\sigma_1}+\frac{x-a+b}{\sigma_2}}{\sqrt{2\pi t^3}}\exp\left(-\frac{\left(\frac{b-z+a}{\sigma_1}+\frac{x-a+b}{\sigma_2}\right)^2}{2t} \right)
  \mathrm{d} b\\
=&\frac{\sigma_2}{\sigma_1}\frac{1}{\sigma_1+\sigma_2} \int_{\left(\frac{-z+a}{\sigma_1}+\frac{x-a}{\sigma_2}\right)^2}^{\infty}  \frac{e^{-\frac{v}{2t}}}{\sqrt{2\pi t^3}}\mathrm{d}v\\
=&\frac{2\sigma_2}{(\sigma_1+\sigma_2)\sigma_1\sqrt{2\pi t}}\exp\left(-\frac{\left(\frac{z-a}{\sigma_1}-\frac{x-a}{\sigma_2}\right)^2}{2t}\right).
\end{align*}
Therefore, by Theorem \ref{densityth4}, for $x\ge a$,
\begin{align*}
	p(t;x,z)=\begin{cases}
\frac{\sigma_1-\sigma_2}{(\sigma_1+\sigma_2)\sqrt{2\pi t \sigma_2^2}}	\exp\left(-\frac{(x+z-2a)^2}{2t\sigma_2^2}\right)+ \frac{1}{\sqrt{2\pi t \sigma_2^2}}\exp\left(-\frac{(z-x)^2}{2t\sigma_2^2}\right),	\quad z\ge a,\\
\frac{2\sigma_2}{(\sigma_1+\sigma_2)\sigma_1\sqrt{2\pi t }}	\exp\left(-\frac{(\frac{z-a}{\sigma_1}-\frac{x-a}{\sigma_2})^2}{2t}\right)		, \quad z<a,
	\end{cases}
\end{align*}
which recovers the density of oscillating Brownian motion
obtained by Keilson and Wellner \cite{Keilson}; see also Chen et al. \cite{chenepsteinzhang} and Chen et al. \cite{chenzili}.

To illustrate the results, we plot the  transition density of $X_1$  in the following examples where  we choose $a\equiv0$ without loss of generality.

\begin{figure}[ht]
  \centering
  \includegraphics[width=0.8\linewidth]{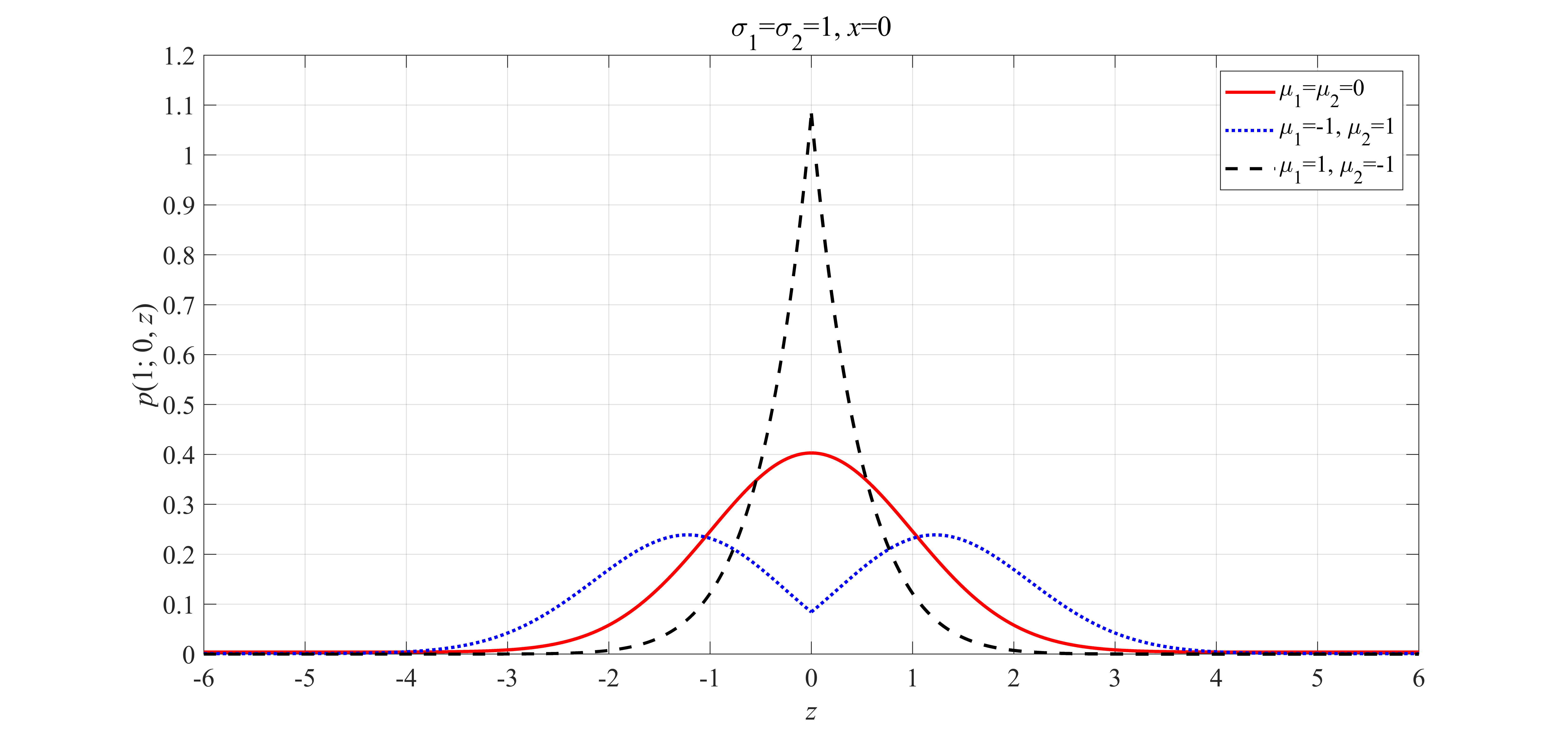}
  \caption{The graph of $p(1;0,z)$ for $\sigma_1=\sigma_2=1.$}\label{sigmaeq}
\end{figure}
{\it Example} 5.3.
Let $a=0$, $\sigma_1=\sigma_2=1$. Figure \ref{sigmaeq} gives the graphs of $p(1;0,z)$ for $\mu_1=\mu_2=1$; $\mu_1=-1,\mu_2=1$; $\mu_1=1,\mu_2=-1$.
which are all continuous. When $\mu_1=-1,\mu_2=1$, the density function is symmetric with two peaks. When $\mu_1=1, \mu_2=-1$, the density function is symmetric with a single peak. When $\mu_1=\mu_2=1$, it reduces to the standard normal density.

{\it Example} 5.4.
Let $a=0$, $\mu_1=\mu_2=0$. Figure \ref{mueq} gives the graphs of $p(1;0,z)$ for $\sigma_1=\sigma_2=1$; $\sigma_1=1,\sigma_2=2$; $\sigma_1=1.5,\sigma_2=1$.
The density functions are all unimodal with
 a jump at $z=0$, and are asymmetric   if $\sigma_1\neq\sigma_2$.
\begin{figure}[ht]
  \centering
  \includegraphics[width=0.8\linewidth]{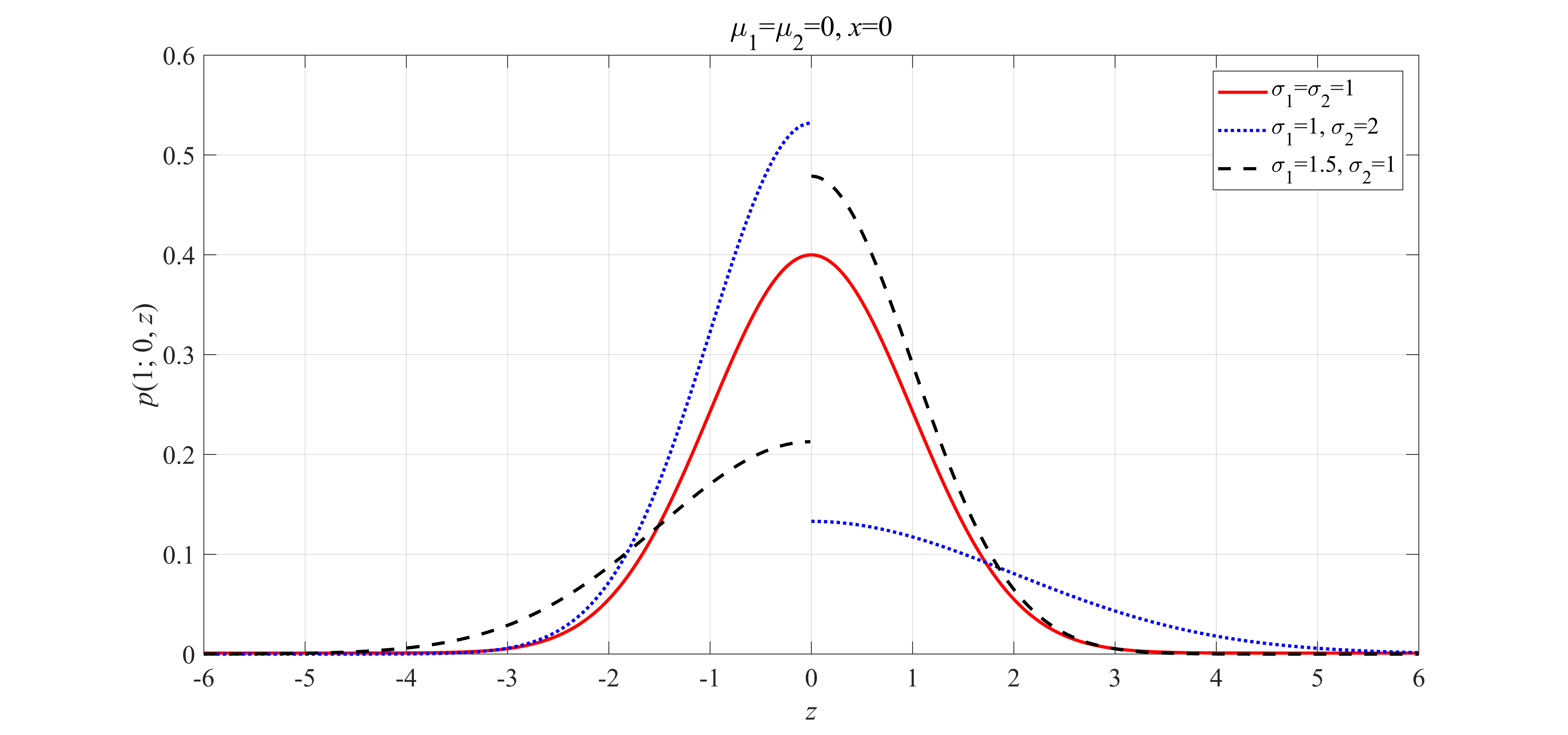}
  \caption{The graph of $p(1;0,z)$ for $\mu_1=\mu_2=0$.}\label{mueq}
\end{figure}

{\it Example} 5.5.
Let $a=0$, $\sigma_1=1,\sigma_2=2$. Figure \ref{musigmadi} gives the graphs of $p(1;0,z)$ for different $\mu_1$ and $\mu_2$.
We can observe that  the density functions are discontinuous at $z=0$ and exhibit either a single peak or two peaks.
\begin{figure}[ht]
  \centering
  \includegraphics[width=0.8\linewidth]{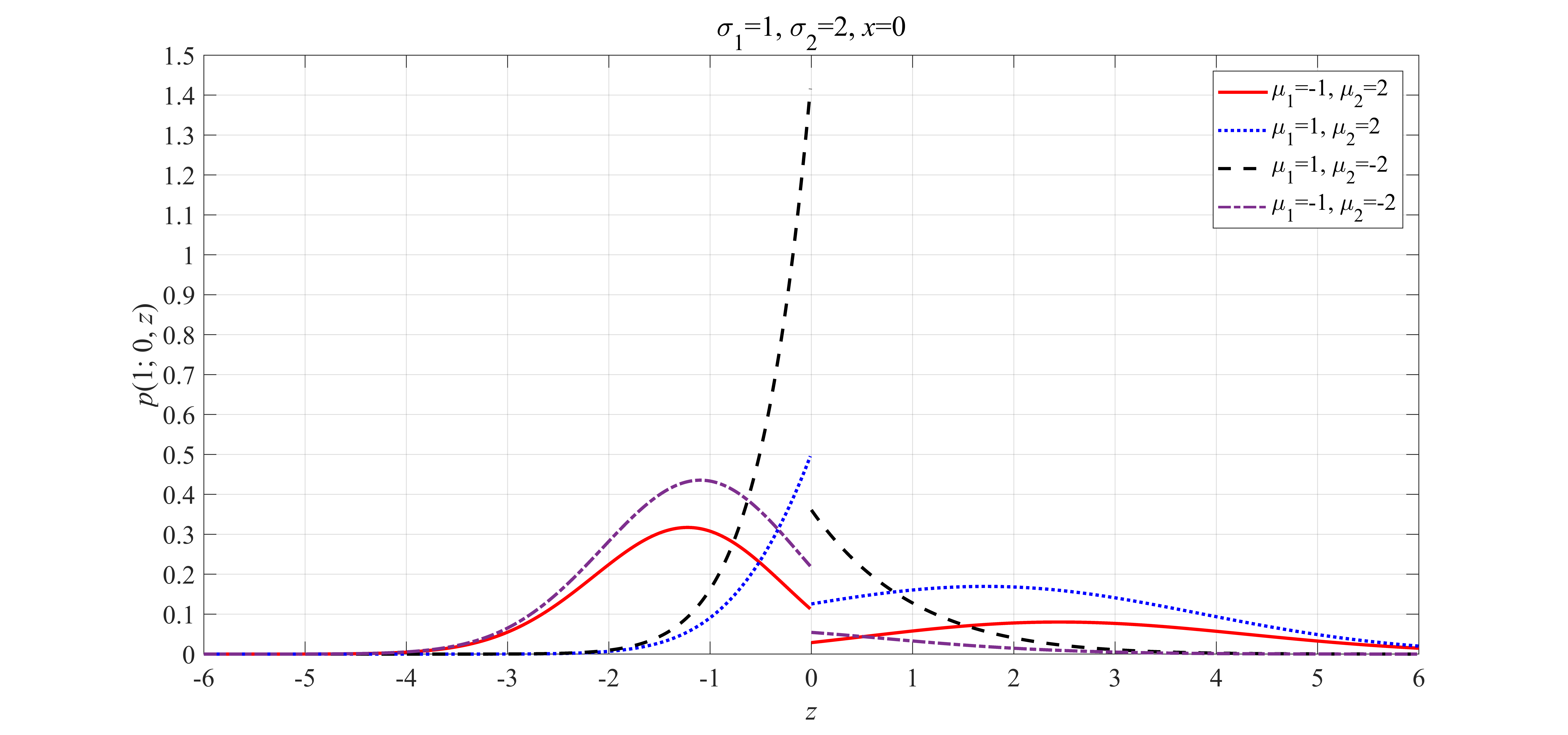}
  \caption{The graph of $p(1;0,z)$.}\label{musigmadi}
\end{figure}

\section{The control problem}
In this section, we consider the control problem of McNamara  \cite{McNamara} on
diffusion with  two dynamics over a finite time interval $[0,T]$. Two pairs of real numbers $\left(\underline{\mu}, \underline{\sigma}\right)$ and $\left(\overline{\mu}, \overline{\sigma}\right)$ are given with $0<\underline{\sigma}<\overline{\sigma}$.

An admissible control is defined as a process $\sigma(\cdot):=(\sigma(t))_{0\leq t \leq T}$ adapted to the filtration $(\mathcal{F}_t)_{0 \leq t \leq T}$ and taking values in the two element set $\left\{\underline{\sigma}, \overline{\sigma}\right\}$. We denote the set of all admissible controls by $\mathcal{U}$. 
For $\sigma(\cdot)\in \mathcal{U}$, consider the following stochastic control
system
\begin{equation} \label{state}\begin{cases}
\dd X^\sigma_t= \mu(\sigma(t)) \mathrm{d} t+\sigma(t) \mathrm{d} B_t,   \quad t\in[0,T],\\
X^\sigma_0=x,
\end{cases}
\end{equation}
where $\mu(\cdot)$ denotes the function which maps $\underline{\sigma}$ to $\underline{\mu}$ and $\overline{\sigma}$ to $\overline{\mu}$, respectively.
Thus, we may either choose $\underline{\mu}$ as the drift coefficient and $\underline{\sigma}$ as the diffusion coefficient or choose $\overline{\mu}$ as the drift and $\overline{\sigma}$ as the diffusion coefficient.

The objective is to maximize the following profit functional over $\mathcal{U}$
$$J(x ; \sigma(\cdot)):=P(X_T^{\sigma}\ge a).$$
The value function is defined by $V(x):=\max _{\sigma(\cdot) \in \mathcal{U}}J(x ; \sigma(\cdot))$.
An admissible control $\sigma(\cdot)$ is called optimal if $P(X_T^{\sigma}\ge a) = V(x)$, and the corresponding equation \eqref{state} is the optimal state equation.

\begin{theorem}\label{cor5}
In the above control problem,
the optimal state equation is
\begin{equation}\label{optstate}\begin{cases}
\dd X^{\sigma^*}_t= \left(\overline{\mu} \mathbf{1}_{\{X^{\sigma^*}_t\le \alpha(T-t)+a\}}+\underline{\mu} \mathbf{1}_{\{X^{\sigma^*}_t> \alpha(T-t)+a\}}\right) \mathrm{d} t\\
\quad\quad\quad\quad+ \left(\overline{\sigma} \mathbf{1}_{\{X^{\sigma^*}_t\le  \alpha(T-t)+a\}}+\underline{\sigma} \mathbf{1}_{\{X^{\sigma^*}_t>  \alpha(T-t)+a\}}\right)  \mathrm{d} B_t, \\
X^{\sigma^*}_0=x,\quad  t\in[0,T],
\end{cases}
\end{equation}
the optimal control is
$$\sigma^*(t)=\overline{\sigma}\mathbf{1}_{\{X_t^{\sigma^*}\le \alpha(T-t)+a\}}+\underline{\sigma}\mathbf{1}_{\{X_t^{\sigma^*}> \alpha(T-t)+a\}},$$
and the value function is
\begin{equation*}
V(x)=\int_a^{\infty} p(T;x-\alpha T,z)\mathrm{d}z,
\end{equation*}
where $\alpha=\frac{\overline{\mu}\underline{\sigma}-\underline{\mu}\overline{\sigma}}{\overline{\sigma}-\underline{\sigma}}$, and $p(T;x-\alpha T,z)$ is the transition density function in Theorem \ref{densityth4} with $\mu_1=\overline{\mu}+\alpha$, $\mu_2=\underline{\mu}+\alpha$, $\sigma_1=\overline{\sigma}$, $\sigma_2=\underline{\sigma}$.

In particular,
if $\underline{\mu}/\underline{\sigma}=\overline{\mu}/\overline{\sigma}$,
then $\alpha=0$, and
the optimal state equation \eqref{optstate} becomes
$$\begin{cases}
\dd X^{\sigma^*}_t= \left(\overline{\mu} \mathbf{1}_{\{X^{\sigma^*}_t\le a\}}+\underline{\mu} \mathbf{1}_{\{X^{\sigma^*}_t> a\}}\right) \mathrm{d} t+ \left(\overline{\sigma} \mathbf{1}_{\{X^{\sigma^*}_t\le  a\}}+\underline{\sigma} \mathbf{1}_{\{X^{\sigma^*}_t>  a\}}\right)  \mathrm{d} B_t, \quad  t\in[0,T],\\
X^{\sigma^*}_0=x,
\end{cases}
$$
the optimal control becomes
$$\sigma^*(t)=\overline{\sigma}\mathbf{1}_{\{X_t^{\sigma^*}\le a\}}+\underline{\sigma}\mathbf{1}_{\{X_t^{\sigma^*}> a\}},$$
and the value function is
$$V(x)=\int_a^{\infty} p(T;x,z)\mathrm{d}z,$$
where $p(T;x,z)$ is the transition density function in Theorem \ref{densityth4} with $\mu_1=\overline{\mu}$, $\mu_2=\underline{\mu}$, $\sigma_1=\overline{\sigma}$, $\sigma_2=\underline{\sigma}$.
\end{theorem}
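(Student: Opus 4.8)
The plan is to treat this as a finite-horizon stochastic control problem, solve it by dynamic programming, and then exploit a space-time shift that turns it into a question about the \emph{time-homogeneous} diffusion \eqref{SDE}, whose transition density is already available from Theorem \ref{densityth4}. For $(t,x)\in[0,T]\times\mathbb{R}$ I introduce the value function $u(t,x):=\sup_{\sigma(\cdot)\in\mathcal{U}}\mathbb{P}\big(X_T^\sigma\ge a\mid X_t^\sigma=x\big)$, so that $V(x)=u(0,x)$. Formally $u$ solves the Hamilton--Jacobi--Bellman equation
$$
u_t+\max_{s\in\{\underline{\sigma},\overline{\sigma}\}}\Big[\mu(s)\,u_x+\tfrac12 s^2 u_{xx}\Big]=0,\qquad u(T,x)=\mathbf{1}_{\{x\ge a\}},
$$
and since $\overline{\sigma}>\underline{\sigma}$ the inner maximum is attained at $s=\overline{\sigma}$ exactly when $(\overline{\mu}-\underline{\mu})u_x+\tfrac12(\overline{\sigma}^2-\underline{\sigma}^2)u_{xx}\ge 0$. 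Thus the entire problem reduces to locating the curve on which this combination of derivatives changes sign.

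The crucial step is the shift $Y_t:=X_t^\sigma-\alpha(T-t)$ with $\alpha=\frac{\overline{\mu}\,\underline{\sigma}-\underline{\mu}\,\overline{\sigma}}{\overline{\sigma}-\underline{\sigma}}$, chosen precisely so that $\frac{\overline{\mu}+\alpha}{\overline{\sigma}}=\frac{\underline{\mu}+\alpha}{\underline{\sigma}}=:\lambda$. Because $Y_T=X_T^\sigma$, the target event $\{X_T^\sigma\ge a\}$ is preserved, while $\mathrm{d}Y_t=(\mu(\sigma(t))+\alpha)\,\mathrm{d}t+\sigma(t)\,\mathrm{d}B_t$, so that under \emph{either} control value $Y$ obeys $\mathrm{d}Y_t=\sigma(t)\big(\lambda\,\mathrm{d}t+\mathrm{d}B_t\big)$ and the moving boundary $\{x\le\alpha(T-t)+a\}$ collapses to the fixed boundary $\{y\le a\}$. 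I would then guess the feedback control $\sigma^*(t)=\overline{\sigma}\mathbf{1}_{\{Y_t\le a\}}+\underline{\sigma}\mathbf{1}_{\{Y_t>a\}}$; under it the transformed process solves exactly the autonomous SDE \eqref{SDE} with $(\mu_1,\sigma_1)=(\overline{\mu}+\alpha,\overline{\sigma})$ and $(\mu_2,\sigma_2)=(\underline{\mu}+\alpha,\underline{\sigma})$, whose transition density $p$ is supplied by Theorem \ref{densityth4}.

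Accordingly I would set $v(t,y):=\int_a^\infty p(T-t;y,z)\,\mathrm{d}z=\mathbb{P}_{t,y}(Y_T\ge a)$, which by the Markov property of the solution to \eqref{SDE} solves the backward Kolmogorov equation $v_t+\lambda\sigma^*(y)v_y+\tfrac12\sigma^*(y)^2 v_{yy}=0$ with $v(T,\cdot)=\mathbf{1}_{[a,\infty)}$ and $C^1$ smooth pasting at $y=a$ (the analogue of the $g_q^{\pm}\in C^1(\mathbb{R})$ from Section \ref{LT}). To invoke the verification theorem it remains to confirm that $\sigma^*$ is the HJB maximizer, i.e. that $g(t,y):=\lambda v_y+\tfrac12(\overline{\sigma}+\underline{\sigma})v_{yy}\ge 0$ on $\{y\le a\}$ and $\le 0$ on $\{y>a\}$ (note $\overline{\mu}-\underline{\mu}=\lambda(\overline{\sigma}-\underline{\sigma})$, so this is equivalent to the sign condition of the first paragraph). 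Here $v_y>0$ is clear from monotonicity of the survival probability in the starting point, and below the target the convexity $v_{yy}>0$ makes $g>0$ at once; the delicate point, and the main obstacle, is the region $y>a$, where $v$ is concave and one must show the concavity term dominates $\lambda v_y$ as $y\to\infty$. I expect to establish this from the explicit density in Theorem \ref{densityth4}, equivalently by differentiating the Laplace-transform representation of Section \ref{LT}.

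Once the sign condition is in hand, the verification theorem identifies $u(t,x)=v\big(t,\,x-\alpha(T-t)\big)$ and shows $\sigma^*$ is optimal; translating back through $X_t^{\sigma^*}=Y_t+\alpha(T-t)$ recovers the optimal state equation \eqref{optstate} and the stated feedback. Finally, since $Y_0=x-\alpha T$ and $Y_T=X_T^{\sigma^*}$,
$$
V(x)=\mathbb{P}(Y_T\ge a)=\int_a^\infty p(T;x-\alpha T,z)\,\mathrm{d}z,
$$
with $p$ as in Theorem \ref{densityth4}. The special case $\underline{\mu}/\underline{\sigma}=\overline{\mu}/\overline{\sigma}$ forces $\alpha=0$, whereupon the moving boundary degenerates to $a$ and both displayed formulas specialize immediately.
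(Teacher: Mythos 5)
Your space--time shift $Y_t=X_t^\sigma-\alpha(T-t)$ with $\alpha=\frac{\overline{\mu}\,\underline{\sigma}-\underline{\mu}\,\overline{\sigma}}{\overline{\sigma}-\underline{\sigma}}$, the resulting equal-ratio identity $\frac{\overline{\mu}+\alpha}{\overline{\sigma}}=\frac{\underline{\mu}+\alpha}{\underline{\sigma}}=\lambda=\frac{\overline{\mu}-\underline{\mu}}{\overline{\sigma}-\underline{\sigma}}$, the collapse of the moving boundary to a fixed one, the translation back to \eqref{optstate}, and the final computation $V(x)=\int_a^\infty p(T;x-\alpha T,z)\,\mathrm{d}z$ via Theorem \ref{densityth4} are exactly the paper's argument. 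Where you diverge is in how optimality of the threshold feedback is established: the paper, having reduced to the equal-ratio case, simply invokes McNamara's theorem \cite{McNamara}, whereas you propose to reprove it by an HJB verification argument. That substitution is where your proposal has a genuine gap: the decisive step --- the sign condition $g(t,y):=\lambda v_y+\tfrac12(\overline{\sigma}+\underline{\sigma})v_{yy}\ge 0$ on $\{y\le a\}$ and $\le 0$ on $\{y>a\}$ --- is not proved but only ``expected.'' Moreover, the half you treat as immediate is itself neither proved nor robust: $v_y>0$ together with $v_{yy}>0$ below the target yields $g>0$ only when $\lambda\ge 0$, but the theorem imposes no sign assumption on $\overline{\mu},\underline{\mu}$, so $\lambda$ may be negative; and the asserted convexity/concavity split of $v$ across $y=a$ is precisely the content that must be extracted from the explicit density, which is nontrivial since $v$ is only $C^1$ across the interface, with $v_{yy}$ jumping there according to the pasting relation $\tfrac12\overline{\sigma}^2 v_{yy}(a-)+\lambda\overline{\sigma}\,v_y(a)=\tfrac12\underline{\sigma}^2 v_{yy}(a+)+\lambda\underline{\sigma}\,v_y(a)$.

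A second, smaller obstruction: with terminal datum $\mathbf{1}_{[a,\infty)}$ and coefficients discontinuous across the interface, a classical verification theorem does not apply off the shelf; one needs an It\^o--Tanaka/Meyer--It\^o argument exploiting the $C^1$ pasting you mention (so that the local-time term vanishes, as in the paper's martingale treatment of $g_q^{\pm}$ in Section 2), together with an approximation of the discontinuous terminal condition. None of this is fatal, but it is real work that your outline defers. The shortest repair is the paper's own: once your shift has produced the equal-ratio problem for $Y$, cite McNamara \cite{McNamara}, whose result asserts exactly that $\sigma^*=\overline{\sigma}\mathbf{1}_{\{Y\le a\}}+\underline{\sigma}\mathbf{1}_{\{Y>a\}}$ is optimal there; the remainder of your proposal --- identifying the controlled process with the solution of \eqref{SDE} for $\mu_1=\overline{\mu}+\alpha$, $\mu_2=\underline{\mu}+\alpha$, $\sigma_1=\overline{\sigma}$, $\sigma_2=\underline{\sigma}$, reading off $V$ from the density, and the specialization $\alpha=0$ when $\underline{\mu}/\underline{\sigma}=\overline{\mu}/\overline{\sigma}$ --- then goes through as written.
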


\begin{proof}
We introduce an auxiliary process $Y_t^{\sigma}:=X_t^{\sigma}-\alpha (T-t)-a$.
Then $Y_T^{\sigma}=X_T^{\sigma}-a$ and
$$\begin{cases}
\dd Y^\sigma_t=[\mu(\sigma(t))+\alpha] \mathrm{d} t+ \sigma(t) \mathrm{d} B_t, \quad t\in[0,T],\\
 Y^\sigma_0=x-\alpha T-a,
\end{cases}
$$
where
$$\alpha=\frac{\overline{\mu}\underline{\sigma}-\underline{\mu}\overline{\sigma}}{\overline{\sigma}-\underline{\sigma}}.$$
Thus, we just need to find $\sigma^*_t\in\mathcal{U} $ such that
$$
J(x; \sigma^*(\cdot))=\max _{\sigma(\cdot) \in \mathcal{U}}P(Y_T^{\sigma}\ge 0).
$$
Since
$$\frac{\underline{\mu}+\alpha}{\underline{\sigma}}=\frac{\overline{\mu}+\alpha}{\overline{\sigma}}
=\frac{\overline{\mu}-\underline{\mu}}{\overline{\sigma}-\underline{\sigma}},$$
by the result in McNamara \cite{McNamara}, we can show that the optimal control $\sigma^*(t)$ is given by $\sigma^*(t)=\overline{\sigma}\mathbf{1}_{\{Y_t^{\sigma^*}\le 0\}}+\underline{\sigma}\mathbf{1}_{\{Y_t^{\sigma^*}> 0\}}$, where
$$\begin{cases}
\dd Y^{\sigma^*}_t= \left(\overline{\mu} \mathbf{1}_{\{Y^{\sigma^*}_t\le 0\}}+\underline{\mu} \mathbf{1}_{\{Y^{\sigma^*}_t> 0\}}+\alpha\right) \mathrm{d} t+ \left(\overline{\sigma} \mathbf{1}_{\{Y^{\sigma^*}_t\le 0\}}+\underline{\sigma} \mathbf{1}_{\{Y^{\sigma^*}_t> 0\}}\right)  \mathrm{d} B_t,  \quad t\in[0,T],\\
 Y^{\sigma^*}_0=x-\alpha T-a.
\end{cases}
$$Thus, for $t\in[0,T]$,  the optimal control is
$$\sigma^*(t)=\overline{\sigma}\mathbf{1}_{\{X_t^{\sigma^*}\le \alpha(T-t)+a\}}+\underline{\sigma}\mathbf{1}_{\{X_t^{\sigma^*}> \alpha(T-t)+a\}},$$ and the optimal state equation is \eqref{optstate}.

For compute the value function, let us introduce another auxiliary process $Z^{\sigma}_t:=Y^{\sigma}_t+a$. Then
$$\begin{cases}
\dd Z^{\sigma^*}_t=\left(\overline{\mu} \mathbf{1}_{\{Z^{\sigma^*}_t\le a\}}+\underline{\mu} \mathbf{1}_{\{Z^{\sigma^*}_t> a\}}+\alpha\right) \mathrm{d} t+  \left(\overline{\sigma} \mathbf{1}_{\{Z^{\sigma^*}_t\le a\}}+\underline{\sigma} \mathbf{1}_{\{Z^{\sigma^*}_t> a\}}\right)  \mathrm{d} B_t, \quad t\in[0,T],\\
Z^{\sigma^*}_0=x-\alpha T,
\end{cases}
$$
and the optimal control can be written as $\sigma^*(t)=\overline{\sigma}\mathbf{1}_{\{Z_t^{\sigma^*}\le a\}}+\underline{\sigma}\mathbf{1}_{\{Z_t^{\sigma^*}> a\}}$.
The value function becomes
$$V(x)=J(x ; \sigma^*(\cdot))=P(X_T^{\sigma^*}\ge a)= P(Z_T^{\sigma^*}\ge a)=\int_a^{\infty} p(T;x-\alpha T,z)\mathrm{d}z,$$
where $p(T;x-\alpha T,z)$ is the transition density function in Theorem \ref{densityth4} with $\mu_1=\overline{\mu}+\alpha$, $\mu_2=\underline{\mu}+\alpha$, $\sigma_1=\overline{\sigma}$, $\sigma_2=\underline{\sigma}$.
\end{proof}
\begin{remark}
In the foraging model, $\left(\underline{\mu}, \underline{\sigma}\right)$ and $\left(\overline{\mu}, \overline{\sigma}\right)$ represent
the mean and standard deviation of the energy gain per minute under two prey
types. The forager can survive at terminal time $T$ if the energy reserved $X_T$ exceeding some critical level $a$.
 The optimal diet of a forager  maximizes its probability of
 survival. 
  By Theorem \ref{cor5}, we not only get the optimal foraging strategy but also the  explicit expression of maximum survival distribution $\max _{\sigma(\cdot) \in \mathcal{U}}P(X_T^{\sigma}\ge a)$.
\end{remark}

\vspace*{1em}
\noindent\textbf{Acknowledgement}
We would like to express our gratitude to Shuang Tan  for creating the function graphs in this paper.
The first author was supported by the National Key  R$\&$D Program of China grant 2018YFA0703900. The second author was supported by the Shandong Province Natural Science Foundation grants ZR2021MA098 and ZR2019ZD41.  The third author was supported by the Natural Sciences and Engineering Research Council
of Canada Research grant RGPIN-2021-04100.

\end{document}